\documentclass[british]{amsart}

\pdfoutput=1


\usepackage{babel}
\usepackage[T1]{fontenc}
\usepackage[utf8]{inputenc}
\usepackage[varg]{pxfonts}

\usepackage{needspace}
\usepackage{setspace}
\setstretch{1.15}

\usepackage{aliascnt}
\usepackage{varioref}
\labelformat{equation}{\textup{(#1)}}
\labelformat{enumi}{\textup{(#1)}}

\usepackage{amssymb,mathrsfs}

\usepackage{ifpdf}
\ifpdf
\usepackage{microtype}
\fi

\usepackage[pdfusetitle, unicode=true, bookmarks=true, pdfborder={0 0 0}, colorlinks=false, breaklinks=true, linktoc=all]{hyperref}

\hypersetup{breaklinks=false, colorlinks=true, linkcolor=blue, citecolor=blue, urlcolor=blue, linktoc=page} 


\title{Involutive \texorpdfstring{$A_\infty$--}{A-infinity }algebras and dihedral cohomology}

\author{Christopher Braun}
\address{Centre for Mathematical Science\\
City University London\\
Northampton Square\\
London EC1V 0HB\\UK}
\email{Christopher.Braun.1@city.ac.uk}

\keywords{$A_\infty$--algebras, involution, dihedral cohomology, deformation theory, operads}



\theoremstyle{plain}
\newtheorem{theorem}{Theorem}[section]
\newcommand{\newautoreftheorem}[2]{
\newaliascnt{#1}{theorem}\newtheorem{#1}[#1]{#2}\aliascntresetthe{#1}%
\expandafter\def\csname #1autorefname\endcsname{#2}}

\newautoreftheorem{proposition}{Proposition}
\newautoreftheorem{lemma}{Lemma}
\newautoreftheorem{corollary}{Corollary}
\newtheorem*{theorem*}{Theorem}

\theoremstyle{definition}
\newautoreftheorem{definition}{Definition}
\newautoreftheorem{remark}{Remark}
\newautoreftheorem{example}{Example}
\newautoreftheorem{examples}{Examples}
\newautoreftheorem{notation}{Notation}
\newautoreftheorem{convention}{Convention}
\newtheorem*{notation*}{Notation}

\numberwithin{equation}{section} 
\numberwithin{figure}{section}   

\newcommand{\co}{\colon\thinspace}
\newcommand{\dual}{\mathbf{D}}
\newcommand{\mob}{\mathrm{M}}
\newcommand{\cfree}[2]{\widehat{\mathcal{F}}_{#1}[#2]}
\newcommand{\com}{\mathcal{C}om}
\newcommand{\ass}{\mathcal{A}ss}
\newcommand{\lie}{\mathcal{L}ie}
\newcommand{\CC}{\mathrm{CC}}
\newcommand{\HCC}{\mathrm{HC}}
\newcommand{\CD}{\mathrm{CD}}
\newcommand{\HCD}{\mathrm{HD}}

\newcommand{\HCE}{\mathrm{HCE}}
\newcommand{\hoch}{\mathrm{CH}}
\newcommand{\Hhoch}{\mathrm{HH}}

\newcommand{\Hharr}{\mathrm{HHarr}}
\newcommand{\MC}{\mathrm{MC}}
\newcommand{\MCmoduli}{\mathscr{MC}}
\newcommand{\Def}{\mathrm{Def}}
\newcommand{\Der}{\mathrm{Der}}
\newcommand{\cycl}{\mathrm{cycl}}
\newcommand{\id}{\mathrm{id}}
\newcommand{\degree}[1]{\lvert #1 \rvert}
\newcommand{\reduction}[1]{\overline{#1}}
\DeclareMathOperator{\ad}{ad}
\DeclareMathOperator{\Hom}{Hom}
\DeclareMathOperator{\Aut}{Aut}
\DeclareMathOperator{\IHom}{\underline{Hom}}
\newcommand{\dgvect}{\mathbf{dgVect}_k}
\newcommand{\fdgvect}{\mathscr{F}\dgvect}
\newcommand{\dgop}{\mathbf{dgOp}}


\begin{document}
\def\sectionautorefname{Section}

\begin{abstract}
We define and study the cohomology theories associated to $A_\infty$--algebras and cyclic $A_\infty$--algebras equipped with an involution, generalising dihedral cohomology to the $A_\infty$ context. Such algebras arise, for example, as unoriented versions of topological conformal field theories. It is well known that Hochschild cohomology and cyclic cohomology govern, in a precise sense, the deformation theory of $A_\infty$--algebras and cyclic $A_\infty$--algebras and we give analogous results for the deformation theory in the presence of an involution. We also briefly discuss generalisations of these constructions and results to homotopy algebras over Koszul operads, such as $L_\infty$--algebras or $C_\infty$--algebras equipped with an involution.
\end{abstract}

\maketitle 
\tableofcontents

\section{Introduction}
In this note we shall study $A_\infty$--algebras and cyclic $A_\infty$--algebras (sometimes called Frobenius $A_\infty$--algebras) equipped with an involution, which we call involutive $A_\infty$--algebras and cyclic involutive $A_\infty$--algebras respectively. Examples of involutive $A_\infty$--algebras can often be found alongside standard examples of $A_\infty$--algebras arising in geometric contexts, when the underlying geometric object has an involution. For example, the de Rham cohomology of a closed oriented manifold carries the structure of a cyclic $A_\infty$--algebra. If the manifold is equipped with an involution then the de Rham cohomology is an involutive $A_\infty$--algebra and if the involution is orientation preserving then it is a cyclic involutive $A_\infty$--algebra. Since the identity is an example of such an involution, the de Rham cohomology of a closed oriented manifold could always be considered as a cyclic involutive $A_\infty$--algebra.

Costello \cite{costello2007:ribbondecomp,costello2007:tcftcalabiyau} showed that cyclic $A_\infty$--algebras are equivalent to open topological conformal field theories, in other words algebras over the modular operad of chains on certain moduli spaces of Riemann surfaces. Furthermore, given a cyclic $A_\infty$--algebra $A$ he constructs an open--closed topological conformal field theory whose open sector is $A$ and whose closed sector has underlying space $\hoch_\bullet(A,A)$, the Hochschild homology complex of $A$. It was shown in \cite{braun:moduliklein} that cyclic involutive $A_\infty$--algebras are equivalent to open Klein topological conformal field theories, in other words algebras over the modular operad of chains on certain moduli spaces of Klein surfaces. Therefore, following Costello, it is of interest to study the corresponding (co)homology theories associated to involutive $A_\infty$--algebras.

Hochschild cohomology is the natural cohomology theory associated to $A_\infty$--algebras and cyclic cohomology is the natural theory associated to cyclic $A_\infty$--algebras. We will recall this setup and compare it to the analogous theory for involutive $A_\infty$--algebras. In particular we will relate this to Loday's dihedral cohomology theory \cite{loday1987:dihedral,loday1992:cyclichombook} for involutive associative algebras, obtaining along the way a generalisation of dihedral cohomology to involutive $A_\infty$--algebras. This generalisation of the well-known construction of dihedral cohomology for involutive associative algebras to involutive $A_\infty$--algebras is so natural that it is no more complicated than the original theory. From the perspective of open Klein topological conformal field theories this also adds to the relevance of dihedral cohomology itself, exposing it as the `unoriented' version of cyclic cohomology.

It is well known that Hochschild cohomology and cyclic cohomology govern deformations of $A_\infty$--algebras and cyclic $A_\infty$--algebras respectively. This holds true for the corresponding involutive cohomology theories and in \autoref{sec:cyc} we shall explain more precisely what this means.

More generally, associated to any operad is a corresponding cohomology theory for algebras over (a cofibrant replacement for) that operad. The construction of attaching an involution to an operad outlined in \cite{braun:moduliklein} can be used to define and study involutive homotopy $\mathcal{P}$--algebras. In \autoref{sec:operads} we consider this more general setup for $\mathcal{P}$ a Koszul operad. In particular we will show that our notion of an involutive $\mathcal{P}$--algebra is a `correct' notion in the sense that it is equivalent to an algebra over a cofibrant replacement for the operad governing involutive $\mathcal{P}$--algebras. We should note however that our notion only weakens the multiplication up to homotopy but not the involution or bilinear form.

Many of the results here could be seen as an application of the general theory of algebraic operads and a detailed exposition of the general approach is contained in the book by Loday and Vallette \cite{lodayvallette2012:operadbook}. However, with the exception of the final section, we neither require nor desire heavy use of operad machinery since our aim is to understand in more detail the case for the specific operad $\ass$.

\subsection{Notation and conventions}
Throughout the paper $k$ will denote a field of characteristic zero.

Denote by $\dgvect$ the symmetric monoidal category of differential cohomologically $\mathbb{Z}$--graded $k$--linear vector spaces with symmetry isomorphism $s\co V\otimes W \rightarrow W\otimes V$ given by $s(v\otimes w) = (-1)^{\degree{v}\degree{w}} w \otimes v$. Here $\degree{v}$ and $\degree{w}$ are the degrees of the homogeneous elements $v$ and $w$. We denote the set of morphisms from $V$ to $W$, which are linear maps preserving the grading and the differentials, by $\Hom(V,W)$. This has the structure of a vector space. The category of graded vector spaces is a subcategory of $\dgvect$ by equipping graded vector spaces with the zero differential.

Let $\Sigma k$ be the one dimensional vector space concentrated in degree $-1$ with zero differential and let $\Sigma^{-1} k$ be the one dimensional vector space concentrated in degree $1$ with zero differential. We define the suspension functor by $V \mapsto \Sigma V = \Sigma k \otimes V$ and the desuspension functor by $V \mapsto \Sigma^{-1} V = \Sigma^{-1} k \otimes V$. By $\Sigma^n V$ we mean the $n$--fold suspension/desuspension of $V$.

Let $V,W$ be differential graded vector spaces. Denote by $\IHom(V,W)^n$ the vector space of homogeneous linear maps of degree $n$ (maps of graded vector spaces $f\co V\rightarrow \Sigma^n W$ preserving the grading but not necessarily the differential). Denote by $\IHom(V,W)=\bigoplus_n\Sigma^{-n}\IHom(V,W)^n$. This is a differential graded vector space with differential given by $dm = d_W\circ m - (-1)^{\degree{m}}m\circ d_V$ where $d_V$ and $d_W$ are the differentials on $V$ and $W$ respectively and $m$ is a homogeneous map of degree $\degree{m}$. Observe that $m$ is a chain map if both $\degree{m}=0$ and $dm=0$. In fact $\dgvect$ is then a symmetric monoidal closed category with internal $\Hom$ given by $\IHom$.

If one wishes to work with homologically graded objects then set $\Sigma k$ to be concentrated in degree $1$ and $\Sigma^{-1} k$ concentrated in degree $-1$. Then the definitions of the suspension and desuspension are the same as above. Similarly we can also work with supergraded objects, in which case $\Sigma=\Sigma^{-1}$. Everything presented will work in any of these gradings.

We will also need to consider \emph{formal} vector spaces. Precisely, a differential formal $\mathbb{Z}$--graded $k$--linear vector space $V$ is an inverse limit of finite dimensional $\mathbb{Z}$--graded $k$--linear vector spaces $V_i$, so that $V=\lim_{\leftarrow} V_i$, equipped with the inverse limit topology and a continuous differential. Morphisms between such spaces are required to be continuous linear maps preserving the grading and the differentials.

The \emph{completed tensor product} of two formal spaces $V=\lim_{\leftarrow} V_i$ and $W=\lim_{\leftarrow}W_j$ is the formal space $V \otimes W=\lim_{\leftarrow} V_i\otimes W_j$. Denote by $\fdgvect$ the symmetric monoidal category of differential formal $\mathbb{Z}$--graded $k$--linear vector spaces with symmetry isomorphism $s\co V \otimes W\rightarrow W \otimes V$ given by $s(v \otimes w) = (-1)^{\degree{v}\degree{w}} w \otimes v$. We have suspension and desuspension functors defined in the same way as before.

In particular given $V\in\dgvect$ then $V$ is the direct limit of its finite dimensional subspaces so $V=\lim_{\rightarrow}V_i$. We write $V^*\in\fdgvect$ for the space $V^*=\lim_{\leftarrow}\IHom(V_i, k)$. Similarly given $V\in\fdgvect$ with $V=\lim_{\leftarrow}V_i$ we write $V^*\in\dgvect$ for the space $V^*=\lim_{\rightarrow}\IHom(V_i,k)$. With this convention we have $V^{**}\cong V$ and $(V\otimes W)^*\cong V^*\otimes W^*$ for any pair $V$ and $W$ both in either $\dgvect$ or $\fdgvect$. In fact the functor $V\mapsto V^*$ is an anti-equivalence of symmetric monoidal categories. In particular an algebra in the category $\fdgvect$ is the same as a coalgebra in $\dgvect$.

Given $V=\lim_\leftarrow V_i\in\fdgvect$ and $W\in\dgvect$ define $V\otimes W$ to be the space $\lim_\leftarrow V_i\otimes W$, which in general is neither formal nor discrete. However, note that in this case $(V\otimes W)^0$ is the space of linear maps from $V^*$ to $W$ preserving the grading.

For clarity we will write $\Sigma V^*$ to mean $\Sigma (V^*)$. There is a natural isomorphism $(\Sigma V)^* \cong \Sigma^{-1} V^*$.

Note that in general we will not require algebras to be unital unless stated. By an augmented associative or commutative algebra it is meant a unital algebra $A$ equipped with an algebra map $A\rightarrow k$. The augmentation ideal $A_+$ is the kernel of this map. By a \emph{formal} associative/commutative algebra we mean a commutative algebra which is an inverse limit of finite dimensional \emph{nilpotent} associative/commutative algebras. Given an augmented formal associative/commutative algebra $R$ then the maximal ideal is $R_+$.

Given $W\in\fdgvect$ we write $\widehat{T}W$ for the free formal augmented differential graded associative algebra generated by $W$ given explicitly by the completed tensor algebra
\[
\widehat{T}W = \prod_{n=0}^\infty W^{\otimes n} = k \times W \times (W\otimes W) \times \dots
\]
We write $\widehat{T}_{\geq i}W$ for the subalgebra of elements of order $i$ and above. This gives a decreasing filtration of this algebra.

\section{Involutive Hochschild cohomology of involutive \texorpdfstring{$A_\infty$--}{A-infinity }algebras}\label{sec:noncyc}
We begin by briefly recalling the basic definitions concerning $A_\infty$--algebras. For a gentler introduction see \cite{keller2001:introtoainfinity,keller2006:ainfinityalgebrasmodules}.

\begin{definition}
Let $A$ be an graded associative algebra. A \emph{graded derivation} of $A$ is a graded map $f\co A\rightarrow A$ such that for any $x,y\in A$
\[
f(xy) = f(x)y + (-1)^{\degree{f}\degree{x}}xf(y).
\]
The space spanned by all graded derivations forms a Lie subalgebra of the space $\IHom(A,A)$ of graded linear maps with the commutator bracket and is denoted by $\Der(A)$.
\end{definition}

\begin{definition}
Let $V$ be a graded vector space. An \emph{$A_\infty$--algebra structure} on $V$ is a derivation $m\co \widehat{T}_{\geq 1}\Sigma^{-1}V^*\rightarrow \widehat{T}_{\geq 1}\Sigma^{-1}V^*$ of degree one such that $m^2 = 0$.
\end{definition}

\begin{remark}
Since we are considering the \emph{completed} tensor algebra, which is a formal vector space, maps and derivations are of course required to be continuous.
\end{remark}

Recall that such a derivation $m$ is determined completely by its restriction to $\Sigma^{-1}V^*$. Denote by $m_n\co\Sigma^{-1}V^*\rightarrow (\Sigma^{-1}V^*)^{\otimes n}$ the order $n$ part of this restriction so that $m = m_1 + m_2 + \dots$ on the subspace $\Sigma^{-1}V^*$. Such a collection of $m_n$ is equivalent to a collection of maps $\hat{m}_n\co V^{\otimes n} \rightarrow V$ of degrees $2-n$ satisfying the usual $A_\infty$--conditions.

Note that $m_1^2 = 0$ so $(V,\hat{m}_1)$ is a differential graded vector space. If $m_n = 0$ for $n > 2$ then $\hat{m}_2$ is an associative product on $V$ respecting the differential and so a differential graded associative algebra is a special case of an $A_\infty$--algebra.

\begin{definition}
Let $(V,m)$ and $(W,m')$ be $A_\infty$--algebras. Then an \emph{$A_\infty$--morphism} of $A_\infty$--algebras is a map $\phi$ of associative algebras $\phi\co\widehat{T}_{\geq 1}\Sigma^{-1}W^*\rightarrow \widehat{T}_{\geq 1}\Sigma^{-1}V^*$ such that $m \circ\phi = \phi\circ m'$.
\end{definition}

\begin{definition}
Let $(V,m)$ be an $A_\infty$--algebra. Then the space of derivations $\Der(\widehat{T}\Sigma^{-1}V^*)$ is naturally a differential graded Lie algebra with bracket the commutator bracket and differential given by $d(\xi)=[m,\xi]$.

The \emph{Hochschild cohomology complex} of $V$ with coefficients in itself is the differential graded vector space $\hoch^{\bullet}(V,V) = \Sigma^{-1}\Der(\widehat{T}\Sigma^{-1}V^*)$. The cohomology of this will be denoted by $\Hhoch^{\bullet}(V,V)$.
\end{definition}

In the case that $V$ is an associative algebra then note that this coincides with the classical definition of the Hochschild cohomology. This is also the reason for the desuspension in this definition.

There is a natural decreasing filtration of Lie algebras (so that filtration degree is preserved by the bracket) $F_{-1}\supset F_0\supset \dots $ on $\Der(\widehat{T}\Sigma^{-1}V^*)$, defined by $F_n=\Der(\widehat{T}\Sigma^{-1}V^*)_{\geq n}$ where $m\in \Der(\widehat{T}\Sigma^{-1}V^*)_{\geq n}$ if and only if $m_k = 0$ for all $k<n+1$. Note that $\Der(\widehat{T}\Sigma^{-1}V^*)_{\geq -1} = \Der(\widehat{T}\Sigma^{-1}V^*)$ and $\Der(\widehat{T}\Sigma^{-1}V^*)_{\geq 0} = \Der(\widehat{T}_{\geq 1}\Sigma^{-1}V^*)$. If $V$ is concentrated in degree $0$ then this coincides with the filtration by degree. We set $\hoch^\bullet(V,V)_{\geq n} = \Sigma^{-1}\Der(\widehat{T}\Sigma^{-1}V^*)_{\geq n-1}$.

\subsection{Involutive \texorpdfstring{$A_\infty$--}{A-infinity }algebras}
\Needspace*{4\baselineskip}

\begin{definition}
Let $V$ be a graded vector space. An involution on $V$ is a map $v\mapsto v^*$ with $(v^*)^*=v$. An \emph{involutive differential graded associative algebra} is a differential graded associative algebra $A$ with an involution satisfying $(xy)^* = (-1)^{\degree{x} \degree{y}}y^*x^*$ and $d(x)^* = d(x^*)$.
\end{definition}

If $V$ has an involution then $\Sigma V$ and $\Sigma^{-1} V$ do in the obvious way. $V^*$ also has an involution defined by $\phi^*(v) = -\phi(v^*)$ for $\phi\in V^*$ (note the appearance of the minus sign here).

Let $W$ be an graded vector space with an involution. Then $W^{\otimes n}$ admits an involution defined by
\[
(w_1\otimes w_2 \otimes \dots \otimes w_n)^* = (-1)^{\epsilon} w_n^*\otimes\dots\otimes w_2^* \otimes w_1^*
\]
where $\epsilon = \sum_{i=1}^n \degree{w_i}\left ( \sum_{j=i+1}^n \degree{w_j} \right )$ arises from permuting the $w_i$ with degrees $\degree{w_i}$. It follows that $\widehat{T}\Sigma^{-1}V^*$ has an involution induced by that on $V$ making it into an involutive graded associative algebra.

\begin{definition}
Let $V$ be a graded vector space with an involution. An \emph{involutive $A_\infty$--algebra structure} on $V$ is a derivation $m\co \widehat{T}_{\geq 1} \Sigma^{-1} V^* \rightarrow \widehat{T}_{\geq 1} \Sigma^{-1} V^*$ of degree one such that $m^2=0$ and $m$ preserves the involution: $m(x^*) = m(x)^*$.
\end{definition}

The requirement $m(x^*) = m(x)^*$ can be unwrapped in terms of the $\hat{m}_n\co V^{\otimes n}\rightarrow V$ and the involution on $V$ to obtain
\[
\hat{m}_n(x_1,\dots , x_n)^* = (-1)^{\epsilon}(-1)^{n(n+1)/2-1} \hat{m}_n(x_n^*,\dots,x_1^*)
\]
where $\epsilon = \sum_{i=1}^n \degree{x_i}\left ( \sum_{j=i+1}^n \degree{x_j} \right )$ arises from permuting the $x_i\in V$ with degrees $\degree{x_i}$. In particular if $m_n = 0 $ for $n > 2$ then this corresponds to the structure of an involutive differential graded associative algebra, which is thus a special case of an involutive $A_\infty$--algebra.

\begin{definition}
Let $(V,m)$ and $(W,m')$ be involutive $A_\infty$--algebras. Then an \emph{involutive $A_\infty$--morphism} of involutive $A_\infty$--algebras is a map $\phi$ of associative algebras $\phi\co\widehat{T}_{\geq 1}\Sigma^{-1}W^*\rightarrow \widehat{T}_{\geq 1}\Sigma^{-1}V^*$ such that $m \circ\phi = \phi\circ m'$ and $\phi$ preserves the involution: $\phi(x^*) = \phi(x)^*$.
\end{definition}

The requirement that $\phi$ preserves the involution is, of course, the same as requiring it to be a map of involutive associative algebras.

\begin{definition}
Let $(V,m)$ be an involutive $A_\infty$--algebra. Then the subspace of derivations $\Der_+(\widehat{T}\Sigma^{-1}V^*)\subset \Der(\widehat{T}\Sigma^{-1}V^*)$ preserving the involution is naturally a differential graded Lie subalgebra with bracket the commutator bracket and differential given by $d(\xi)=[m,\xi]$.

The \emph{involutive Hochschild cohomology complex} of $V$ with coefficients in itself is the differential graded vector space $\hoch^{\bullet}_+(V,V) = \Sigma^{-1}\Der_+(\widehat{T}\Sigma^{-1}V^*)$. The cohomology of this will be denoted by $\Hhoch^{\bullet}_+(V,V)$.
\end{definition}

\subsection{Decomposition of Hochschild cohomology}
Let $V$ be a graded vector space with an involution.

\begin{definition}\label{def:skewinvlie}
A \emph{skew-involutive differential graded Lie algebra} is a differential graded Lie algebra $\mathfrak{g}$ with an involution satisfying $[x,y]^* = [x^*,y^*]$ and $d(x)^* = d(x^*)$.
\end{definition}

\begin{proposition}
For $\xi\in\Der(\widehat{T}\Sigma^{-1}V^*)$ define $\xi^*$ by $\xi^*(x) = \xi(x^*)^*$. Then $\xi^*\in\Der(\widehat{T}\Sigma^{-1}V^*)$ and furthermore this makes the Lie algebra $\Der(\widehat{T}\Sigma^{-1}V^*)$ into a skew-involutive Lie algebra, by which it is meant that $[\xi,\eta]^* = [\xi^*,\eta^*]$.
\end{proposition}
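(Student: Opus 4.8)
The plan is to verify the proposition in two stages: first that $\xi^*$ is again a derivation, and second that the assignment $\xi\mapsto\xi^*$ is an involution satisfying the skew-compatibility $[\xi,\eta]^*=[\xi^*,\eta^*]$. Throughout I would exploit that the involution on $\widehat{T}\Sigma^{-1}V^*$ is an anti-automorphism of the associative algebra, that is $(ab)^*=(-1)^{\degree{a}\degree{b}}b^*a^*$, together with the fact that the involution is degree-preserving and hence $\degree{\xi^*}=\degree{\xi}$.

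First I would check $\xi^*\in\Der(\widehat{T}\Sigma^{-1}V^*)$ directly from the Leibniz rule. Writing $\xi^*(x)=\xi(x^*)^*$ and evaluating on a product $xy$, I would substitute $(xy)^*=(-1)^{\degree{x}\degree{y}}y^*x^*$, apply the derivation property of $\xi$ to the product $y^*x^*$, and then apply the anti-automorphism property of $*$ to the resulting two terms. The sign $(-1)^{\degree{x}\degree{y}}$ from the input, the Koszul signs generated when $*$ reverses the two factors of each output term, and the sign $(-1)^{\degree{\xi}\degree{y^*}}$ from the Leibniz rule for $\xi$ should combine so that the two terms reassemble into $\xi^*(x)\,y+(-1)^{\degree{\xi^*}\degree{x}}x\,\xi^*(y)$, exactly the Leibniz rule for $\xi^*$. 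Because a derivation is determined by its restriction to the generators $\Sigma^{-1}V^*$, and $*$ preserves this generating subspace, it suffices to know $\xi^*$ is well-defined and continuous there, which is immediate.

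Next I would confirm $(\xi^*)^*=\xi$, which follows from $(x^*)^*=x$ by a one-line computation, so $*$ is genuinely an involution on $\Der(\widehat{T}\Sigma^{-1}V^*)$. For the bracket identity I would compute $[\xi,\eta]^*$ by evaluating on an element $x$: using $[\xi,\eta]=\xi\eta-(-1)^{\degree{\xi}\degree{\eta}}\eta\xi$ and the definition of $*$, I would expand $[\xi,\eta]^*(x)=[\xi,\eta](x^*)^*$ and rewrite each composite using $\xi(y)^*=\xi^*(y^*)$ twice. The key manipulation is that applying $*$ to $x$ and then back commutes past the compositions, so $(\xi\eta(x^*))^*=\xi^*(\eta(x^*)^*)=\xi^*\eta^*(x)$, and similarly for the other term, yielding $[\xi^*,\eta^*](x)$ with the correct sign. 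This step is where all the signs must be tracked carefully.

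The main obstacle will be the sign bookkeeping in the Leibniz computation of the first step, since the minus sign in the definition of the involution on $V^*$ (noted explicitly in the excerpt, $\phi^*(v)=-\phi(v^*)$) propagates through the tensor powers and must interact correctly with the permutation sign $\epsilon$ appearing in the involution on $W^{\otimes n}$. Once the derivation property is established the remaining identities are formal consequences of $*$ being a degree-preserving anti-involution, so I expect no difficulty there; the content is entirely in confirming that the defined signs are consistent, which is guaranteed by the fact that $*$ was constructed precisely to be an algebra anti-automorphism on $\widehat{T}\Sigma^{-1}V^*$.
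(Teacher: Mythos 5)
Your proposal is correct and follows essentially the same route as the paper: a direct Leibniz-rule check (using that $*$ is a degree-preserving anti-automorphism of $\widehat{T}\Sigma^{-1}V^*$) to see that $\xi^*$ is again a derivation, and then the identity $\xi(\eta(x^*))^*=\xi^*(\eta^*(x))$ — which is precisely the key observation the paper cites — to obtain $[\xi,\eta]^*=[\xi^*,\eta^*]$. The paper compresses the first step to ``straightforward calculation,'' so your sketch simply makes explicit what it leaves implicit.
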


\begin{proof}
Straightforward calculation shows that $\xi^*$ is indeed a derivation. The result now follows from the observation that $\xi(\eta(x^*))^* = \xi^*(\eta^*(x))$.
\end{proof}

\begin{remark}
If one instead preferred to define the involutive Hochschild cohomology complex as the \emph{quotient} of the usual Hochschild cohomology complex by the action of $\mathbb{Z}_2$ given by $\xi\mapsto \xi^*$ this is of course equivalent to the definition used above by the isomorphism of invariants and coinvariants.
\end{remark}

Note that $\Der_+(\widehat{T}\Sigma^{-1}V^*)$ is the eigenspace of the eigenvalue $+1$ of this involution. So denote by $\Der_-(\widehat{T}\Sigma^{-1}V^*)$ the eigenspace of the eigenvalue $-1$. For $\xi\in\Der(\widehat{T}\Sigma^{-1}V^*)$ denote by $\xi\mapsto\xi^+$ and $\xi\mapsto\xi^-$ the projections onto these eigenspaces given by $\xi^+ = 1/2(\xi+\xi^*)$ and $\xi^- = 1/2(\xi - \xi^*)$. Then $\xi=\xi^+ + \xi^-$ and as graded vector spaces $\Der(\widehat{T}\Sigma^{-1}V^*)=\Der_+(\widehat{T}\Sigma^{-1}V^*)\oplus \Der_-(\widehat{T}\Sigma^{-1}V^*)$. Note that this is not a decomposition of Lie algebras, however
\[
[\Der_-(\widehat{T}\Sigma^{-1}V^*),\Der_-(\widehat{T}\Sigma^{-1}V^*)] \subset \Der_+(\widehat{T}\Sigma^{-1}V^*).
\]
Now let $(V,m)$ be an involutive $A_\infty$--algebra. Then since $[m,\xi]^* = [m,\xi^*]$ it follows that this decomposition is in fact a decomposition of \emph{differential} graded vector spaces.

\begin{definition}
The \emph{skew-involutive Hochschild cohomology complex} of an involutive $A_\infty$--algebra $(V,m)$ with coefficients in itself is the differential graded vector space $\hoch^{\bullet}_-(V,V) = \Sigma^{-1}\Der_-(\widehat{T}\Sigma^{-1}V^*)$. The cohomology of this will be denoted by $\Hhoch^{\bullet}_-(V,V)$.
\end{definition}

\begin{theorem}\label{thm:hochschilddecomposition}
For an involutive $A_\infty$--algebra $(V,m)$ the Hochschild cohomology of $V$ decomposes as $\Hhoch^\bullet(V,V)\cong \Hhoch^{\bullet}_+(V,V)\oplus\Hhoch^{\bullet}_-(V,V)$.
\qed
\end{theorem}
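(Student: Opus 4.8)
The plan is to deduce the decomposition directly from the observations recorded immediately before the statement, since at that point essentially all the work has already been done. The key structural fact to extract is that the involution $\xi\mapsto\xi^*$ on $\Der(\widehat{T}\Sigma^{-1}V^*)$ is a \emph{chain map}: it clearly has degree zero, and it preserves the differential. Indeed, because $m$ preserves the involution we have $m^*=m$ (unwinding $m^*(x)=m(x^*)^*$ against $m(x^*)=m(x)^*$ shows $m^*=m$, that is, $m\in\Der_+(\widehat{T}\Sigma^{-1}V^*)$), and the skew-involutive identity $[\xi,\eta]^*=[\xi^*,\eta^*]$ then gives $(d\xi)^*=[m,\xi]^*=[m^*,\xi^*]=[m,\xi^*]=d(\xi^*)$. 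Thus $\xi\mapsto\xi^*$ defines an action of $\mathbb{Z}_2$ on the complex $\Der(\widehat{T}\Sigma^{-1}V^*)$ by chain automorphisms.

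First I would use that $k$ has characteristic zero to split the complex along the eigenvalues of this involution. The projections $\xi\mapsto\xi^\pm=\tfrac12(\xi\pm\xi^*)$ are linear combinations of the identity and the chain map $\xi\mapsto\xi^*$, hence are themselves chain maps, and they are complementary idempotents onto $\Der_+(\widehat{T}\Sigma^{-1}V^*)$ and $\Der_-(\widehat{T}\Sigma^{-1}V^*)$. Since the differential commutes with $\xi\mapsto\xi^*$ it preserves each eigenspace, so these two subspaces are subcomplexes and $\Der(\widehat{T}\Sigma^{-1}V^*)=\Der_+(\widehat{T}\Sigma^{-1}V^*)\oplus\Der_-(\widehat{T}\Sigma^{-1}V^*)$ as differential graded vector spaces, precisely as was already noted.

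It then remains only to pass to cohomology. Cohomology is additive over finite direct sums of complexes, so taking cohomology of the above splitting yields an isomorphism between the cohomology of the total complex and the direct sum of the cohomologies of the two summands. The desuspension $\Sigma^{-1}$ appearing in the definitions of $\hoch^\bullet(V,V)$ and $\hoch^\bullet_\pm(V,V)$ is an invertible (hence exact) functor commuting with both direct sums and the formation of cohomology, so applying it and recalling the definitions gives $\Hhoch^\bullet(V,V)\cong\Hhoch^\bullet_+(V,V)\oplus\Hhoch^\bullet_-(V,V)$. There is no substantive obstacle here: the only point requiring any verification is that the involution commutes with the differential, and this in turn hinges solely on $m$ being involution-preserving so that $m^*=m$; everything else is the formal behaviour of a $\mathbb{Z}_2$--action on a complex over a field of characteristic zero.
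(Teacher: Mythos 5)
Your argument is correct and is exactly the one the paper intends: the theorem is stated with an immediate \qed because the preceding paragraphs already establish that $\xi\mapsto\xi^*$ commutes with $d(\xi)=[m,\xi]$ (via $m^*=m$ and $[\xi,\eta]^*=[\xi^*,\eta^*]$), so the eigenspace splitting is one of complexes and passes to cohomology. You have simply written out the details the paper leaves implicit; there is no difference in approach.
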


\begin{remark}
From the perspective of deformation theory, which we will elaborate upon later, for an involutive $A_\infty$--algebra $(V,m)$ general theory tells us that $\Hhoch_+^\bullet(V,V)$ governs involutive $A_\infty$--deformations of $m$ and $\Hhoch^\bullet(V,V)$ governs $A_\infty$--deformations of $m$. Therefore in a certain sense $\Hhoch_-^\bullet(V,V)$ should measure the difference between the involutive and usual deformation theory.
\end{remark}

\section{Dihedral cohomology of cyclic involutive \texorpdfstring{$A_\infty$--}{A-infinity }algebras}\label{sec:cyc}
We begin by recalling the cyclic cohomology of $A_\infty$--algebras and how it relates to cyclic $A_\infty$--algebras.

\subsection{Cyclic \texorpdfstring{$A_\infty$--}{A-infinity }algebras}
\Needspace*{4\baselineskip}

\begin{proposition}\label{prop:cyclicdifferential}
Let $(V, m)$ be an $A_\infty$--algebra. Denote by $\CC^{\bullet}(V)$ the graded vector space
\[
\CC^{\bullet}(V) = \Sigma \prod_{i=1}^{\infty}[(\Sigma^{-1}V^*)^{\otimes i}]_{\mathbb{Z}_i}
\]
where $\mathbb{Z}_i$ is the cyclic group of order $i$ acting in the obvious way. Then the derivation $m$ on $\widehat{T}\Sigma^{-1}V^*$ induces a well defined derivation on the quotient $\CC^{\bullet}(V)$.
\end{proposition}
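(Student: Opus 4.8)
The plan is to reduce the statement to a compatibility of the graded Leibniz rule with cyclic rotation, and then to check that the various Koszul signs conspire correctly. Write $W=\Sigma^{-1}V^*$ for brevity, so that $\widehat{T}W=\prod_{n\geq 0}W^{\otimes n}$ and $\CC^\bullet(V)=\Sigma\prod_{i\geq 1}[W^{\otimes i}]_{\mathbb{Z}_i}$. Forgetting the degree-zero summand and passing to coinvariants gives a continuous surjection $q\co\widehat{T}_{\geq 1}W\rightarrow\prod_{i\geq 1}[W^{\otimes i}]_{\mathbb{Z}_i}$, and $m$ restricts to a derivation of $\widehat{T}_{\geq 1}W$ since it has no constant term. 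As the suspension $\Sigma$ only shifts degrees, it suffices to produce a continuous map $\bar m$ with $q\circ m=\bar m\circ q$; that is, to show $q\circ m$ annihilates the closed subspace generated by the elements $u-t_i u$, where $t_i$ denotes the signed generator of the cyclic $\mathbb{Z}_i$--action on $W^{\otimes i}$. Concretely, the task is the identity $q(m(u))=q(m(t_i u))$ for every $u\in W^{\otimes i}$.

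Next I would compute both sides via the Leibniz rule. For $u=w_1\otimes\dots\otimes w_i$ one has
\[
m(u)=\sum_{j=1}^{i}\pm\,w_1\otimes\dots\otimes w_{j-1}\otimes m(w_j)\otimes w_{j+1}\otimes\dots\otimes w_i,
\]
and expanding $m(w_j)=\sum_n m_n(w_j)$ each summand is a tensor of length $i-1+n$ whose class in the coinvariants is invariant under cyclic rotation. Since $t_i u$ is precisely $u$ rotated by one position, its Leibniz expansion matches that of $m(u)$ once the terms are indexed by which tensor factor $m$ differentiates: the word produced by hitting $w_j$ in $t_i u$ is a cyclic rotate of the word produced by hitting $w_j$ in $u$. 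Under $q$ these rotates are identified, so the two expansions have the same image. It is worth emphasising that this argument uses only that $m$ is a derivation and never any cyclic symmetry of $m$ itself, consistent with the fact that no invariant pairing on $V$ is assumed here.

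The one genuine obstacle is sign bookkeeping, and this is where I would spend the effort. Three sources of signs must be reconciled: the Koszul sign incurred when the degree-one operator $m$ is moved past $w_1,\dots,w_{j-1}$ in the Leibniz rule, the sign $\epsilon$ appearing in the cyclic action $t_i$, and the sign picked up when identifying a word with its cyclic rotates inside $[W^{\otimes k}]_{\mathbb{Z}_k}$. I would track these on a single representative term $w_1\otimes\dots\otimes m_n(w_j)\otimes\dots\otimes w_i$ and its counterpart arising from $t_i u$, and verify that the three contributions cancel, so that $q(m(u))=q(m(t_i u))$ holds with signs. Granting this, $q\circ m$ factors through the coinvariants and yields the desired continuous degree-one map $\bar m$ on $\CC^\bullet(V)$; continuity is inherited from that of $m$, and the map is well defined on the whole product because $m$ respects the filtration by tensor length. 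Finally, $\bar m$ is induced by the derivation $m$ in the evident sense, and since $m^2=0$ on $\widehat{T}W$ the induced map satisfies $\bar m^2=0$, which is what makes $\CC^\bullet(V)$ into the complex computing cyclic cohomology.
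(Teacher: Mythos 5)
Your proposal is correct in substance and would go through, but it organises the verification differently from the paper, and the comparison is instructive. You present the kernel of $q$ as the closed span of the elements $u-t_iu$ and plan to check $q(m(u))=q(m(t_iu))$ term by term via the Leibniz rule, explicitly deferring the reconciliation of the three sign contributions. The paper instead observes that this same subspace is exactly the closure $M$ of the span of graded commutators $ab-(-1)^{\degree{a}\degree{b}}ba$ of monomials: taking $a=w_1\otimes\dots\otimes w_{i-1}$ and $b=w_i$, the commutator $[a,b]$ is precisely $u-t_iu$ with the sign of the cyclic action built in. With that identification the whole sign chase evaporates, since for any graded derivation one has $m([a,b])=[m(a),b]+(-1)^{\degree{a}}[a,m(b)]$, whence $m(M)\subset M$ immediately; no case analysis over which tensor factor is hit, and no tracking of rotation signs, is needed. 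Your route buys nothing extra here and costs the one genuinely delicate computation, which in your write-up is announced rather than performed --- that computation is the entire non-formal content of the proposition, so if you keep your presentation you should actually carry out the cancellation for a representative term (including the case where $m$ hits the last factor $w_i$ of $u$, where the rotation is by $n$ positions rather than one). Alternatively, adopt the commutator formulation, which also makes the continuity and convergence points you raise automatic, since $M$ is by construction closed under convergent sums. Your closing remarks about $\bar m^2=0$ and the degree shift are fine but are not part of what the proposition asserts.
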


\begin{proof}
Let $M$ be the subspace of $\widehat{T}\Sigma^{-1}V^*$ of convergent sums of elements of the form $ab-(-1)^{\degree{a}\degree{b}}ba$ (in other words the subspace of commutators). Then $\CC^\bullet(V)=\widehat{T}\Sigma^{-1}V^*/M$. A straightforward calculation using the fact that $m$ is a degree one derivation shows that $m(M)\subset M$ as required.
\end{proof}

\begin{definition}
Let $(V,m)$ be an $A_\infty$--algebra. The \emph{cyclic cohomology complex} of $V$ is the differential graded vector space $\CC^{\bullet}(V)$ with differential induced by $m$ as in \autoref{prop:cyclicdifferential}. The cohomology of this will be denoted by $\HCC^{\bullet}(V)$.
\end{definition}

Let $V$ be a graded vector space with a symmetric bilinear form $\langle -, - \rangle \co V\otimes V \rightarrow \Sigma^d k$ of degree $d$. A derivation $m\in\Der(\widehat{T}\Sigma^{-1}V^*)$ is called a \emph{cyclic derivation} if the maps $\hat{m}_n\co V^{\otimes n}\rightarrow V$ satisfy
\begin{equation}\label{eq:cycliccondition}
\langle \hat{m}_n(x_1,\dots , x_n), x_{n+1} \rangle = (-1)^\epsilon(-1)^n \langle \hat{m}_n(x_{n+1},\dots, x_{n-1}), x_n \rangle
\end{equation}
where $\epsilon = \degree{x_{n+1}}\sum_{i=1}^n \degree{x_i}$ arises from permuting the $x_i\in V$. The subspace of cyclic derivations will be denoted by $\Der_\cycl(\widehat{T}\Sigma^{-1}V^*)$. It is a Lie subalgebra.

The bilinear form on $V$ is non-degenerate if the map $V\rightarrow \Sigma^d V^*$ given by $v\mapsto \langle v, - \rangle$ is an isomorphism. This yields a degree $-d$ non-degenerate bilinear form on $V^*$ denoted by $\langle -, - \rangle^{-1}\co V^*\otimes V^* \rightarrow \Sigma^{-d}V^*$ which is symmetric if $d$ is even and anti-symmetric if $d$ is odd.

\begin{definition}
Let $V$ be a graded vector space with a symmetric bilinear form. A \emph{cyclic $A_\infty$--algebra structure} on $V$ is a cyclic derivation $m\in\Der^\cycl(\widehat{T}\Sigma^{-1}V^*)$ of degree one such that $m^2=0$.
\end{definition}

\begin{remark}
A differential graded associative algebra with a symmetric bilinear form satisfying $\langle ab, c \rangle = \langle a, bc \rangle $ is a special case of a cyclic $A_\infty$--algebra.
\end{remark}

Note that a symmetric bilinear form on $V$ of degree $d$ is represented by a degree $d$ element in $V^*\otimes V^*$, or equivalently a degree $d+2$ element in $\Sigma^{-1}V^*\otimes \Sigma^{-1}V^*$.

\begin{definition}
Let $(V,m)$ and $(W,m')$ be cyclic $A_\infty$--algebras with degree $d$ symmetric bilinear forms. Then a \emph{cyclic $A_\infty$--morphism} is a map $\phi$ of $A_\infty$--algebras such that $\phi(\omega') = \omega$ where $\omega\in\Sigma^{-1}V^*\otimes\Sigma^{-1}V^*$ and $\omega'\in\Sigma^{-1}W^*\otimes \Sigma^{-1}W^*$ are the degree $d+2$ elements representing the bilinear forms.
\end{definition}

\begin{remark}\label{rem:preservebilinear}
An $A_\infty$--morphism $\phi$ from $V$ to $W$ is determined by a collection of maps $\hat{\phi}_n\co V^{\otimes n} \rightarrow W$ of degrees $1-n$ satisfying certain compatibility conditions with the $A_\infty$--structures. The condition $\phi(\omega')=\omega$ for a cyclic $A_\infty$--morphism can then be restated explicitly as
\begin{gather*}
\langle \hat{\phi}_1(a_1),\hat{\phi}_1(a_2) \rangle = \langle a_1, a_2 \rangle\\
\sum_{i+j=n} (-1)^\epsilon \langle \hat{\phi}_i(a_1,\dots, a_i), \hat{\phi}_j(a_{i+1},\dots, a_{n} ) \rangle = 0
\end{gather*}
where $\epsilon = (1-j)(\degree{a_1}+\dots +\degree{a_i})$ for $a_i\in V$.
\end{remark} 

\begin{theorem}\label{thm:cycder}
Let $(V,m)$ be a cyclic $A_\infty$--algebra with a \emph{non-degenerate} symmetric bilinear form. Then as complexes $\Sigma^{d+1}\CC^\bullet(V) \cong \Der^\cycl(\widehat{T}\Sigma^{-1}V^*)$ where the right hand side is equipped with the differential given by $d(\xi) = [m,\xi]$.
\end{theorem}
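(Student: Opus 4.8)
The plan is to use the non-degenerate pairing to identify (shifted) cyclic words with cyclic derivations, to match the cyclic condition \autoref{eq:cycliccondition} with invariance under the cyclic group, and finally to check that the resulting linear isomorphism is a map of complexes. Throughout write $W=\Sigma^{-1}V^*$.

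First I would recall that a derivation of $\widehat{T}W$ is determined by its restriction to the generators, so that $\Der(\widehat{T}W)\cong\IHom(W,\widehat{T}W)\cong\prod_{n\geq 0}\IHom(W,W^{\otimes n})$, the component in $\IHom(W,W^{\otimes n})$ being the datum dual to $\hat{\xi}_n\co V^{\otimes n}\to V$. Non-degeneracy of the form supplies an isomorphism $V\cong\Sigma^d V^*$ and hence $W^*\cong\Sigma^{d+2}W$; using this to dualise the source gives a natural isomorphism $\IHom(W,W^{\otimes n})\cong\Sigma^{d+2}W^{\otimes(n+1)}$. Concretely it sends the component corresponding to $\hat{\xi}_n$ to the element of $\Sigma^{d+2}W^{\otimes(n+1)}$ representing the multilinear form $(x_1,\dots,x_{n+1})\mapsto\langle\hat{\xi}_n(x_1,\dots,x_n),x_{n+1}\rangle$.

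The next step is to observe that the cyclic condition \autoref{eq:cycliccondition} is exactly the assertion that this element of $W^{\otimes(n+1)}$ is fixed by the generator of $\mathbb{Z}_{n+1}$ acting with the Koszul sign; the factor $(-1)^n$ in \autoref{eq:cycliccondition} is precisely the correction produced by the desuspension relating the degrees on $V$ to those on $W$, so the signs match. Hence $\Der^\cycl(\widehat{T}W)\cong\Sigma^{d+2}\prod_{n\geq 0}[W^{\otimes(n+1)}]^{\mathbb{Z}_{n+1}}$, the product of the cyclic \emph{invariants}. Since $k$ has characteristic zero the averaging map gives a natural isomorphism $[W^{\otimes i}]^{\mathbb{Z}_i}\cong[W^{\otimes i}]_{\mathbb{Z}_i}$ between invariants and coinvariants, so reindexing by $i=n+1$ yields $\Der^\cycl(\widehat{T}W)\cong\Sigma^{d+2}\prod_{i\geq 1}[W^{\otimes i}]_{\mathbb{Z}_i}=\Sigma^{d+1}\CC^\bullet(V)$ as graded vector spaces. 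Call this isomorphism $\Phi$.

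It remains, and this is where I expect the real work to lie, to show that $\Phi$ intertwines the differential $d(\xi)=[m,\xi]$ on the left with the differential on $\CC^\bullet(V)$ induced by $m$, namely the descent $\reduction{m}$ of the derivation $m$ to the commutator quotient $\widehat{T}W/M=\CC^\bullet(V)$ of \autoref{prop:cyclicdifferential}. Writing $[m,\xi]=m\circ\xi-(-1)^{\degree{\xi}}\xi\circ m$ as derivations and transporting along $\Phi$, the two summands become the two ways of inserting $m$ into the cyclic word $\Phi(\xi)$, and these are exactly the terms produced by applying $\reduction{m}$ to $\Phi(\xi)$. The point that makes the two presentations agree term by term is that $m$ is itself a cyclic derivation: its own cyclicity, together with the cyclic symmetrisation built into passing to coinvariants, is what guarantees that bracketing with $m$ on the derivation side matches differentiating the cyclic word on the $\CC^\bullet$ side. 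I expect the main obstacle to be the careful sign bookkeeping in this final comparison (and, to a lesser extent, in matching \autoref{eq:cycliccondition} with cyclic invariance); once the signs are in hand the chain-map property, and hence the isomorphism of complexes, follows formally.
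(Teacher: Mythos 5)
Your proposal follows essentially the same route as the paper's proof: the isomorphism $\Phi$ you construct is exactly the paper's map $f\co\IHom(\Sigma^{-1}V^*,(\Sigma^{-1}V^*)^{\otimes n})\cong\Sigma^{d+2}(\Sigma^{-1}V^*)^{\otimes n+1}$, the identification of \autoref{eq:cycliccondition} with cyclic invariance and the passage from invariants to coinvariants are the same, and the compatibility of the differentials is likewise deferred as a sign check (which the paper also leaves to the reader). Your sketch of why $[m,-]$ corresponds to the induced derivation on the commutator quotient is a correct elaboration of that final step.
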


\begin{proof}
Since $\IHom(\Sigma^{-1}V^*, (\Sigma^{-1}V^*)^{\otimes n})\cong \Sigma V \otimes (\Sigma^{-1}V^*)^{\otimes n}$ then composing with the isomorphism $V \rightarrow \Sigma^{d}V^*$ on the first tensor factor gives an isomorphism
\[
f\co\IHom(\Sigma^{-1}V^*, (\Sigma^{-1}V^*)^{\otimes n})\cong \Sigma^{d+2} (\Sigma^{-1}V^*)^{\otimes n+1}.
\]
Given $\xi_n\in \IHom(\Sigma^{-1}V^*, (\Sigma^{-1}V^*)^{\otimes n})$ then $\hat{\xi}_n$ satisfies \autoref{eq:cycliccondition} if and only if $f(\xi_n)$ is an invariant with respect to the cyclic action on $\Sigma^{d+2} (\Sigma^{-1}V^*)^{\otimes n+1}$. By the isomorphism of coinvariants and invariants there is therefore an isomorphism of graded vector spaces $\Der^\cycl(\widehat{T}\Sigma^{-1}V^*)\cong \Sigma^{d+1}\CC^\bullet(V)$. It remains to verify that the differential coincides, in other words that $f([m,\xi]) = m(f(\xi))$, which is a straightforward check left to the reader.
\end{proof}

\begin{remark}
Kontsevich's formal noncommutative symplectic geometry \cite{kontsevich1993:ncsg} says that by regarding the bilinear form on $V$ as a symplectic structure then the underlying space of $\CC^{\bullet}(V)$ can be understood as the space of noncommutative Hamiltonians, and $\Der^{\cycl}(\widehat{T}\Sigma^{-1}V^*)$ can be understood as the space of symplectic vector fields. This gives a rather enlightening view of \autoref{thm:cycder}. For a detailed account of this point of view see \cite{hamiltonlazarev:homalgsncg}.
\end{remark}

\begin{remark}\label{rem:cyclie}
It follows that $\Sigma^{d+1}\CC^\bullet(V)$ has the structure of a differential graded Lie algebra. The Lie bracket can be described explicitly on the summands by the formula
\[
[a_1\dots a_n, b_1\dots b_m ] = (-1)^p\sum_{i=1}^{n}\sum_{j=1}^{m}(-1)^{\epsilon}\langle a_i, b_j \rangle^{-1}  a_{i+1}\dots a_1\ldots a_{i-1} b_{j+1}\dots b_1\dots b_{j-1}
\]
where $\epsilon$ arises from permuting the $a_i\in\Sigma^{-1}V^*$ and $b_i\in\Sigma^{-1}V^*$ and $p=d\sum_i \degree{a_i}$.
\end{remark}

We set $\CC^\bullet(V)_{\geq n} = \Sigma \prod_{i=n+1}^{\infty}[(\Sigma^{-1}V^*)^{\otimes i}]_{\mathbb{Z}_i}$. If $V$ is a cyclic $A_\infty$--algebra with a non-degenerate bilinear form then $\Sigma^{d+1}\CC^\bullet(V)_{\geq n} \cong \Der^\cycl(\widehat{T}\Sigma^{-1}V^*)_{\geq n-1}$.

\subsection{Cyclic involutive \texorpdfstring{$A_\infty$--}{A-infinity }algebras and decomposition of cyclic cohomology}
\Needspace*{4\baselineskip}

\begin{definition}\label{def:dihedralactions}
Let $W$ be a graded vector space with an involution. Denote by $D_n$ the dihedral group of order $2n$, presented by $D_n = \langle r, s \mid r^n = s^2 = 1, srs^{-1}=r^{-1} \rangle $. Then there are the following two actions of $D_n$ on $W^{\otimes n}$.
\begin{enumerate}
\item \label{dihedralaction}The \emph{dihedral action} is defined by
\begin{align*}
r(w_1\otimes w_2 \otimes \dots \otimes w_n) &= (-1)^{\epsilon} w_n\otimes w_1 \otimes \dots\otimes w_{n-1}\\
s(w_1\otimes w_2 \otimes \dots \otimes w_n) &= (w_1\otimes w_2 \otimes \dots \otimes w_n)^*
\end{align*}
\item\label{skewdihedralaction} The \emph{skew-dihedral action} is defined by
\begin{align*}
r(w_1\otimes w_2 \otimes \dots \otimes w_n) &= (-1)^{\epsilon} w_n\otimes w_1 \otimes \dots\otimes w_{n-1}\\
s(w_1\otimes w_2 \otimes \dots \otimes w_n) &= -(w_1\otimes w_2 \otimes \dots \otimes w_n)^*
\end{align*}
\end{enumerate}
Here $\epsilon= \degree{w_n}\sum_{i=1}^{n-1}\degree{w_i}$ arises, as usual, from permuting the $w_i\in W$.
\end{definition}

\begin{proposition}\label{prop:dihedraldifferential}
Let $(V, m)$ be an involutive $A_\infty$--algebra.
\begin{itemize}
\item Denote by $\CD^{\bullet}_+(V)$ the graded vector space
\[
\CD^{\bullet}_+(V)= \Sigma \prod_{i=1}^{\infty}[ (\Sigma^{-1} V^*)^{\otimes i} ]_{D_{i}}
\]
where $D_i$ is the dihedral group of order $2i$ acting by the dihedral action \ref{dihedralaction} of \autoref{def:dihedralactions}. Then the derivation $m$ on $\widehat{T}\Sigma^{-1}V^*$ induces a well defined derivation on the quotient $\CD^\bullet_+(V)$.
\item Denote by $\CD^{\bullet}_-(V)$ the graded vector space
\[
\CD^\bullet_-(V) = \Sigma \prod_{i=1}^{\infty}[ (\Sigma^{-1} V^*)^{\otimes i} ]_{D_{i}}
\]
where $D_i$ is the dihedral group of order $2i$ acting by the skew-dihedral action \ref{skewdihedralaction} of \autoref{def:dihedralactions}. Then the derivation $m$ on $\widehat{T}\Sigma^{-1}V^*$ induces a well defined derivation on the quotient $\CD^\bullet_-(V)$.
\end{itemize}
\end{proposition}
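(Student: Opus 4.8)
The plan is to mirror the proof of \autoref{prop:cyclicdifferential}, reducing the statement to the claim that $m$ preserves a suitable closed subspace of $\widehat{T}\Sigma^{-1}V^*$. As there, I would observe that the $D_i$--coinvariants $[(\Sigma^{-1}V^*)^{\otimes i}]_{D_i}$ are obtained by quotienting by the closed span of the elements $x - g\cdot x$ for $g\in D_i$, and that since $D_i = \langle r,s\rangle$ this span is generated by the two families $\{x - r\cdot x\}$ and $\{x - s\cdot x\}$ alone (telescoping any word in $r,s$). Writing $M$ for the commutator subspace of \autoref{prop:cyclicdifferential} and $L_\pm$ for the closed span of the $s$--relations, I would therefore present both quotients as $\CD^\bullet_\pm(V) = \widehat{T}\Sigma^{-1}V^*/N_\pm$ with $N_\pm = M + L_\pm$, and the whole content of the proposition becomes $m(N_\pm)\subset N_\pm$.

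The first ingredient is that $\overline{\mathrm{span}}\{x - r\cdot x\}$ is exactly the commutator subspace $M$: writing a monomial as $x = (w_1\otimes\dots\otimes w_{n-1})\otimes w_n = ab$, the rotation gives $r\cdot x = (-1)^{\degree{a}\degree{b}}ba$, so $x - r\cdot x = ab - (-1)^{\degree{a}\degree{b}}ba$, and conversely every commutator arises this way. Hence \autoref{prop:cyclicdifferential} already supplies $m(M)\subset M\subset N_\pm$, and only the $s$--relations remain. The second ingredient is that, comparing \autoref{def:dihedralactions} with the formula defining the involution on $W^{\otimes n}$, the generator $s$ acts precisely by the algebra involution on $\widehat{T}\Sigma^{-1}V^*$: the dihedral $s$ is $x\mapsto x^*$ and the skew-dihedral $s$ is $x\mapsto -x^*$ (one also checks $s^2=1$ in each case). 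Thus $L_+ = \overline{\mathrm{span}}\{x - x^*\}$ and $L_- = \overline{\mathrm{span}}\{x + x^*\}$.

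Now the involutivity of $(V,m)$ enters directly: because $m(x^*)=m(x)^*$, setting $y=m(x)$ gives $m(x - x^*) = y - y^*\in L_+$ and $m(x + x^*) = y + y^*\in L_-$, and continuity of $m$ extends this to the closed spans, so $m(L_\pm)\subset L_\pm$. Combining with $m(M)\subset M$ yields $m(N_\pm)\subset N_\pm$, so $m$ descends to each quotient; that the induced map is a differential follows at once from $m^2=0$ upstairs. I do not expect a genuine obstacle here — the computations are routine — so the one point to get right is the bookkeeping: verifying that the single-generator difference spaces for $r$ and $s$ together cut out the full $D_i$--coinvariants, and that the $s$--action is exactly (minus) the involution already carried by $\widehat{T}\Sigma^{-1}V^*$, so that the two signs $x\mp x^*$ match the two actions. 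Once this identification is in place, \autoref{prop:cyclicdifferential} handles the rotation relations and the defining property $m(x^*)=m(x)^*$ handles the involution relations.
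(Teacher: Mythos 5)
Your proposal is correct and follows essentially the same route as the paper: present $\CD^\bullet_\pm(V)$ as the quotient of $\widehat{T}\Sigma^{-1}V^*$ by the closed span of the commutators together with the elements $a \mp a^*$, invoke \autoref{prop:cyclicdifferential} for the rotation relations, and use $m(x^*)=m(x)^*$ for the reflection relations. The extra bookkeeping you supply (that the generators $r,s$ suffice to cut out the $D_i$--coinvariants and that $s$ acts by $\pm$ the algebra involution) is implicit in the paper's argument but harmless to spell out.
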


\begin{proof}
Let $M$ be the subspace of $\widehat{T}\Sigma^{-1}V^*$ of convergent sums of elements of the forms $ab - (-1)^{\degree{a}\degree{b}}ba$ and $a - a^*$. Then $\CD^\bullet_+(V) = \Sigma\widehat{T}\Sigma^{-1}V^*/M$. Since $m$ is involutive $m(a-a^*) = m(a)-m(a)^* \in M$ and together with the proof of \autoref{prop:cyclicdifferential} this means $m(M)\subset M$ and the first part follows. The second part is essentially the same.
\end{proof}

\begin{definition}
Let $(V,m)$ be an involutive $A_\infty$--algebra.
\begin{itemize}
\item The \emph{dihedral cohomology complex} of $V$ is the differential graded vector space $\CD^\bullet_+(V)$ with differential induced by $m$ as in \autoref{prop:dihedraldifferential}. The cohomology of this complex will be denoted by $\HCD^{\bullet}_+(V)$.
\item The \emph{skew-dihedral cohomology complex} of $V$ is the differential graded vector space $\CD^\bullet_-(V)$ with differential induced by $m$ as in \autoref{prop:dihedraldifferential}. The cohomology of this complex will be denoted by $\HCD^\bullet_-(V)$.
\end{itemize}
\end{definition}

\begin{theorem}\label{thm:cyclicdecomposition}
For an involutive $A_\infty$--algebra $(V, m)$ the cyclic cohomology of $V$ decomposes as $\HCC^\bullet(V) \cong \HCD^\bullet_+(V) \oplus \HCD^\bullet_-(V)$.
\end{theorem}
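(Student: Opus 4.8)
The plan is to mimic the argument behind the Hochschild decomposition of \autoref{thm:hochschilddecomposition}, realising the two dihedral complexes as the eigenspaces of a single involution acting on the cyclic complex $\CC^\bullet(V)$. The key structural observation is that the dihedral group $D_i$ contains the cyclic group $\mathbb{Z}_i$, generated by $r$, as an index-two subgroup, with quotient $\mathbb{Z}_2$ generated by the class of $s$. Consequently, taking $D_i$-coinvariants can be performed in two stages: first pass to $\mathbb{Z}_i$-coinvariants, which by definition produces the summands of $\CC^\bullet(V) = \Sigma\prod_i[(\Sigma^{-1}V^*)^{\otimes i}]_{\mathbb{Z}_i}$, and then take coinvariants for the residual $\mathbb{Z}_2$-action induced by $s$.

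First I would check that $s$, as in \autoref{def:dihedralactions}, descends to a well-defined involution $\tau$ on $\CC^\bullet(V)$. In the dihedral action $s$ is exactly the order-reversing involution on $(\Sigma^{-1}V^*)^{\otimes i}$ inherited from the involution on $V$, which is the same involution making $\widehat{T}\Sigma^{-1}V^*$ an involutive graded associative algebra. Since this involution carries commutators to commutators (up to sign), it descends to the quotient $\CC^\bullet(V) = \widehat{T}\Sigma^{-1}V^*/M$ by the subspace $M$ of commutators, yielding $\tau$. The skew-dihedral action differs only in that $s$ acts by $-\tau$, which is again an involution. Working over a field of characteristic zero, coinvariants agree with invariants, so $\CD^\bullet_+(V)$ is identified with the $(+1)$-eigenspace of $\tau$ and $\CD^\bullet_-(V)$ with the invariants of $-\tau$, namely the $(-1)$-eigenspace of $\tau$. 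This gives the eigenspace splitting $\CC^\bullet(V) = \CD^\bullet_+(V)\oplus\CD^\bullet_-(V)$ of graded vector spaces.

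It then remains to upgrade this to a splitting of \emph{complexes}, i.e. to show that the differential induced by $m$ preserves each eigenspace. This is where involutivity enters: because $m(x^*) = m(x)^*$, the derivation $m$ commutes with the order-reversing involution on $\widehat{T}\Sigma^{-1}V^*$, so the induced differential on $\CC^\bullet(V)$ commutes with $\tau$. A differential commuting with an involution necessarily preserves its eigenspaces, hence the decomposition is one of differential graded vector spaces, and passing to cohomology gives $\HCC^\bullet(V)\cong\HCD^\bullet_+(V)\oplus\HCD^\bullet_-(V)$.

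I expect the only genuinely delicate point to be the sign bookkeeping needed to confirm that the order-reversing involution on $\widehat{T}\Sigma^{-1}V^*$ really agrees, after passage to $\mathbb{Z}_i$-coinvariants, with the residual $s$-action of \autoref{def:dihedralactions}, and that it sends $M$ into $M$. Once the signs in the involution on $(\Sigma^{-1}V^*)^{\otimes i}$ are matched against the sign $\epsilon$ appearing in the dihedral action, the remaining steps are formal and run parallel to the Hochschild case.
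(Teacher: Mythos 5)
Your proposal is correct and follows essentially the same route as the paper: the paper's proof likewise uses $D_n=\mathbb{Z}_2\ltimes\mathbb{Z}_n$ to realise $\CD^\bullet_\pm(V)$ as quotients of $\CC^\bullet(V)$ by two $\mathbb{Z}_2$--actions, identifies them with the $\pm 1$ eigenspaces via the isomorphism of coinvariants with invariants, and concludes by the same argument as for \autoref{thm:hochschilddecomposition}. Your write-up simply makes explicit the details (descent of $s$ to $\CC^\bullet(V)$ and compatibility of $m$ with the involution) that the paper leaves implicit.
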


\begin{proof}
Since $D_n = \mathbb{Z}_2\ltimes \mathbb{Z}_n$, the complexes $\CD^\bullet_+(V)$ and $\CD^\bullet_-(V)$ are the quotients of $\CC^\bullet(V)$ by two different actions of $\mathbb{Z}_2$ arising from the involution on $V$. By the isomorphism of coinvariants with invariants these spaces can be identified with the eigenspaces of the eigenvalues $+1$ and $-1$ of the involution on $\CC^\bullet(V)$ and the result follows as for \autoref{thm:hochschilddecomposition}.
\end{proof}

\begin{proposition}
Let $V$ be a graded vector space with an involution and a symmetric bilinear form such that $\langle x^*, y^* \rangle = \langle x, y \rangle$. Then $\Der^\cycl(\widehat{T}\Sigma^{-1} V^*)$ is a skew-involutive Lie subalgebra of $\Der(\widehat{T}\Sigma^{-1} V^*)$.
\end{proposition}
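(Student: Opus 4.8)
\textit{(Proof proposal.)} The statement that $\Der^\cycl(\widehat{T}\Sigma^{-1}V^*)$ is a skew-involutive Lie \emph{sub}algebra amounts to two things: that it is closed under the involution $\xi\mapsto\xi^*$, where $\xi^*(x)=\xi(x^*)^*$, and that the bracket identity $[\xi,\eta]^*=[\xi^*,\eta^*]$ holds on it. The latter is immediate, since we have already seen that $\Der(\widehat{T}\Sigma^{-1}V^*)$ is skew-involutive and that $\Der^\cycl(\widehat{T}\Sigma^{-1}V^*)$ is a Lie subalgebra, so the bracket identity is simply inherited. Thus the entire content of the proposition is that $\xi^*$ is a cyclic derivation whenever $\xi$ is, and this is where the hypothesis $\langle x^*,y^*\rangle=\langle x,y\rangle$ must enter.

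The plan is to reformulate the cyclic condition \autoref{eq:cycliccondition} invariantly, following the proof of \autoref{thm:cycder}. To each derivation $\xi$ with components $\hat\xi_n\co V^{\otimes n}\rightarrow V$ I associate the collection of multilinear forms
\[
\Phi_\xi(x_1,\dots,x_{n+1}) = \langle \hat\xi_n(x_1,\dots,x_n), x_{n+1}\rangle,
\]
and observe that \autoref{eq:cycliccondition} says precisely that each $\Phi_\xi$ is invariant (with the appropriate Koszul signs) under the cyclic group $\mathbb{Z}_{n+1}$ acting on the $n+1$ arguments; this reformulation uses only the pairing and not its non-degeneracy. I would then compute $\Phi_{\xi^*}$ directly: unwrapping $\xi^*(x)=\xi(x^*)^*$ into components expresses $\widehat{(\xi^*)}_n(x_1,\dots,x_n)$ as a sign times $\hat\xi_n(x_n^*,\dots,x_1^*)^*$, and the hypothesis $\langle x^*,y^*\rangle=\langle x,y\rangle$, equivalently $\langle a^*,b\rangle=\langle a,b^*\rangle$, then lets me move every involution onto the arguments of $\Phi_\xi$. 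The upshot should be that, up to a cyclic shift and an overall Koszul sign, $\Phi_{\xi^*}$ is $\Phi_\xi$ precomposed with the reflection $s$ of the dihedral action of \autoref{def:dihedralactions}. In other words, the form-compatibility hypothesis is exactly what makes the involution $\xi\mapsto\xi^*$ on derivations correspond to the reflection $s$ on tensor powers.

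Granting this, the conclusion is formal. Since $\Phi_\xi$ is cyclically invariant and the dihedral relation $srs^{-1}=r^{-1}$ shows that $s$ conjugates the cyclic subgroup to itself, precomposition with $s$ sends cyclic invariants to cyclic invariants; hence $\Phi_{\xi^*}$ is again cyclically invariant and $\xi^*$ is cyclic. The main obstacle lies entirely in the middle step: carefully matching the Koszul and combinatorial signs produced by reversing and starring the arguments against the signs built into the dihedral action, so that the identification of $\xi\mapsto\xi^*$ with $s$ is exact rather than merely up to an unaccounted sign. I expect this to be a routine but delicate bookkeeping exercise of the same flavour as the sign verifications in \autoref{thm:cycder} and in the unwrapping of the involutive $A_\infty$--conditions, with the hypothesis $\langle x^*,y^*\rangle=\langle x,y\rangle$ being used precisely to cancel the sign discrepancy that would otherwise obstruct the correspondence.
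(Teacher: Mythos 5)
Your proposal is correct and follows essentially the same route as the paper: both reduce the statement to checking that $\xi^*$ remains cyclic, unwrap $\xi^*(x)=\xi(x^*)^*$ into the components $\hat{\xi}_n$, and use $\langle x^*,y^*\rangle=\langle x,y\rangle$ in the form $\langle a^*,b\rangle=\langle a,b^*\rangle$ to transfer the stars onto the arguments of the pairing. The only difference is cosmetic: the paper finishes by applying \autoref{eq:cycliccondition} directly inside a short chain of equalities, whereas you package the same sign bookkeeping as invariance of the form $\Phi_\xi$ under the normal cyclic subgroup of $D_{n+1}$ together with the action of the reflection $s$.
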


\begin{proof}
Let $m\in \Der^\cycl(\widehat{T}\Sigma^{-1} V^*)$ be a cyclic derivation. Then a straightforward calculation gives
\begin{align*}
\langle \hat{m}^*_n(x_1,\dots , x_n), x_{n+1} \rangle &= (-1)^{\epsilon}(-1)^{n(n+1)/2 - 1} \langle \hat{m}_n(x_n^*,\dots,x_1^*)^*, x_{n+1} \rangle
\\
&= (-1)^{\epsilon}(-1)^{n(n+1)/2 - 1} \langle \hat{m}_n(x_n^*,\dots,x_1^*), x_{n+1}^* \rangle
\\
&=  (-1)^{\epsilon'}(-1)^{n(n+1)/2 - 1}(-1)^n \langle \hat{m}_n(x_{n-1}^*,\dots, x_{n+1}^*), x_n^* \rangle
\\
&= (-1)^{\epsilon'}(-1)^{n(n+1)/2 - 1}(-1)^n \langle \hat{m}_n(x_{n-1}^*,\dots, x_{n+1}^*)^*, x_n \rangle
\\
&= (-1)^{\epsilon''}(-1)^n \langle \hat{m}_n^*(x_{n+1},\dots, x_{n-1}), x_n \rangle
\end{align*}
where $\epsilon$, $\epsilon'$ and $\epsilon''$ arise from the Koszul sign rule permuting the $x_i$, remembering that $\degree{x_i^*} = \degree{x_i}$. Therefore $m^*$ is also a cyclic derivation as required. 
\end{proof}

\begin{corollary}
The space $\Der^{\cycl}_+(\widehat{T}\Sigma^{-1}V^*)=\Der^\cycl(\widehat{T}\Sigma^{-1}V^*)\cap\Der_+(\widehat{T}\Sigma^{-1}V^*)$ of cyclic derivations preserving the involution is a Lie subalgebra and as graded vector spaces $\Der^\cycl(\widehat{T}\Sigma^{-1}V^*) = \Der^{\cycl}_+(\widehat{T}\Sigma^{-1}V^*)\oplus\Der^{\cycl}_-(\widehat{T}\Sigma^{-1}V^*)$.
\qed
\end{corollary}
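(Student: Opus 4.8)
The plan is to deduce this formally from the preceding proposition together with the eigenspace machinery for $\Der(\widehat{T}\Sigma^{-1}V^*)$ set up in \autoref{sec:noncyc}; the genuine work has already been done there. Throughout, write $\Der^\cycl_-(\widehat{T}\Sigma^{-1}V^*) = \Der^\cycl(\widehat{T}\Sigma^{-1}V^*)\cap\Der_-(\widehat{T}\Sigma^{-1}V^*)$, so that each summand is cut out of the cyclic derivations by intersecting with one of the $\pm 1$ eigenspaces of the involution $\xi\mapsto\xi^*$.

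For the first assertion, I would observe that $\Der^\cycl_+(\widehat{T}\Sigma^{-1}V^*)$ is an intersection of two Lie subalgebras of $\Der(\widehat{T}\Sigma^{-1}V^*)$ and is therefore itself a Lie subalgebra. Indeed $\Der^\cycl(\widehat{T}\Sigma^{-1}V^*)$ was already noted to be a Lie subalgebra, while $\Der_+(\widehat{T}\Sigma^{-1}V^*)$ is a Lie subalgebra because it is the $+1$ eigenspace of $\xi\mapsto\xi^*$ and the skew-involutive identity $[\xi,\eta]^* = [\xi^*,\eta^*]$ forces $[\xi,\eta]^* = [\xi,\eta]$ whenever $\xi$ and $\eta$ are invariant. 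No fresh computation is required.

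For the decomposition, I would use the projections $\xi^+ = 1/2(\xi + \xi^*)$ and $\xi^- = 1/2(\xi - \xi^*)$ onto the eigenspaces, exactly as in the non-cyclic case. The one point that genuinely uses the hypothesis $\langle x^*, y^* \rangle = \langle x, y \rangle$ is that these projections do not leave the cyclic derivations, and this is precisely the content of the preceding proposition, which shows $\xi^*\in\Der^\cycl(\widehat{T}\Sigma^{-1}V^*)$ whenever $\xi$ is cyclic. Since cyclic derivations form a linear subspace, it follows that $\xi^+$ and $\xi^-$ are cyclic, and as $\xi^+\in\Der_+$ and $\xi^-\in\Der_-$ by construction we obtain $\xi^+\in\Der^\cycl_+(\widehat{T}\Sigma^{-1}V^*)$ and $\xi^-\in\Der^\cycl_-(\widehat{T}\Sigma^{-1}V^*)$. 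Thus every cyclic derivation splits as $\xi = \xi^+ + \xi^-$ with the summands in the two subspaces, and the sum is direct since $\Der_+(\widehat{T}\Sigma^{-1}V^*)\cap\Der_-(\widehat{T}\Sigma^{-1}V^*) = 0$.

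The only step carrying real content is the stability of cyclicity under the involution, and that was settled in the preceding proposition. Consequently I do not anticipate any serious obstacle here: the corollary is formal eigenspace bookkeeping inherited from the non-cyclic setting, and the sole thing to confirm is that the two projections preserve the subspace of cyclic derivations, which the preceding proposition guarantees.
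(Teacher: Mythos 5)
Your argument is correct and is precisely the one the paper intends: the corollary carries only a \qed because it is meant to follow immediately from the preceding proposition (stability of cyclicity under $\xi\mapsto\xi^*$) combined with the eigenspace projections $\xi^\pm = \tfrac{1}{2}(\xi\pm\xi^*)$ already set up in the non-cyclic setting. You have simply written out the formal bookkeeping that the paper leaves implicit, and every step checks out.
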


\begin{definition}
Let $V$ be a graded vector space with an involution and a symmetric bilinear form such that $\langle x^*, y^* \rangle = \langle x, y \rangle$. A \emph{cyclic involutive $A_\infty$--algebra structure} on $V$ is a cyclic involutive derivation $m \in \Der^\cycl_+(\widehat{T} \Sigma^{-1} V^*)$ of degree one such that $m^2 = 0$.
\end{definition}

\begin{remark}
An involutive differential graded associative algebra with a symmetric bilinear form satisfying $\langle a^*, b^* \rangle = \langle a, b \rangle$ and $\langle ab, c \rangle = \langle a, bc \rangle$ is a special case of a cyclic involutive $A_\infty$--algebra.
\end{remark}

\begin{definition}
Let $(V,m)$ and $(W,m')$ be cyclic involutive $A_\infty$--algebras with degree $d$ symmetric bilinear forms. Then a \emph{cyclic involutive $A_\infty$--morphism} is a map $\phi$ of involutive $A_\infty$--algebras which is also a map of cyclic $A_\infty$--algebras.
\end{definition}

\begin{theorem}
Let $(V, m)$ be a cyclic involutive $A_\infty$--algebra with a \emph{non-degenerate} symmetric bilinear form. Then as complexes
\begin{itemize}
\item $\Sigma^{d+1}\CD^\bullet_+(V) \cong \Der^\cycl_+(\widehat{T} \Sigma^{-1} V^*)$
\item $\Sigma^{d+1}\CD^\bullet_-(V) \cong \Der^\cycl_-(\widehat{T} \Sigma^{-1} V^*)$
\end{itemize}
where $\Der^\cycl_+(\widehat{T} \Sigma^{-1} V^*)$ and $\Der^\cycl_-(\widehat{T} \Sigma^{-1} V^*)$ are each equipped with the differential given by $d(\xi)=[m,\xi]$.
\end{theorem}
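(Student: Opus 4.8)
The plan is to promote the isomorphism $f$ constructed in the proof of \autoref{thm:cycder} to an isomorphism of involution-eigenspaces. Recall that $f\co \Der^\cycl(\widehat{T}\Sigma^{-1}V^*)\cong \Sigma^{d+1}\CC^\bullet(V)$ is already an isomorphism of complexes, so it suffices to show that $f$ intertwines the involution $\xi\mapsto\xi^*$ on the source with the involution on $\CC^\bullet(V)$ whose $+1$ and $-1$ eigenspaces are identified, in the proof of \autoref{thm:cyclicdecomposition}, with $\CD^\bullet_+(V)$ and $\CD^\bullet_-(V)$ respectively. Granting this intertwining, restricting $f$ to eigenspaces gives isomorphisms of graded vector spaces $\Der^\cycl_\pm(\widehat{T}\Sigma^{-1}V^*)\cong\Sigma^{d+1}\CD^\bullet_\pm(V)$; and since $m$ preserves the involution we have $m^*=m$, so by skew-involutivity $[m,\xi]^*=[m,\xi^*]$ and the involution commutes with the differential $d(\xi)=[m,\xi]$ on both sides. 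Hence the eigenspace splittings are splittings of complexes and the two isomorphisms are isomorphisms of complexes.

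First I would unwind $f$ on the degree-$n$ summand: it sends $\xi_n\in\IHom(\Sigma^{-1}V^*,(\Sigma^{-1}V^*)^{\otimes n})$, via the canonical identification with $\Sigma V\otimes(\Sigma^{-1}V^*)^{\otimes n}$ and the form-induced map $\Sigma V\to\Sigma^{d+1}V^*$, to the element of $\Sigma^{d+2}(\Sigma^{-1}V^*)^{\otimes n+1}$ corresponding to the invariant multilinear form $(x_1,\dots,x_{n+1})\mapsto\langle\hat\xi_n(x_1,\dots,x_n),x_{n+1}\rangle$. I would then compute $f(\xi^*_n)$ using the unwrapped involution formula $\hat\xi^*_n(x_1,\dots,x_n)=(-1)^{\epsilon}(-1)^{n(n+1)/2-1}\hat\xi_n(x^*_n,\dots,x^*_1)$ together with the compatibility $\langle x^*,y^*\rangle=\langle x,y\rangle$. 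This is the same string of manipulations as in the proof that $\Der^\cycl$ is skew-involutive, and it expresses $f(\xi^*)$ as an order-reversed, involuted rearrangement of $f(\xi)$ --- that is, as the generator $s$ of the dihedral action of \autoref{def:dihedralactions} applied to $f(\xi)$, up to a sign to be determined.

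The crux is to pin down that sign, since it is what decides whether $\Der^\cycl_+$ matches the dihedral quotient or the skew-dihedral quotient. Three sources of signs must be reconciled: the minus sign in the dual involution $\phi^*(v)=-\phi(v^*)$ on $V^*$, the Koszul signs produced by reversing the order of the arguments $x_1,\dots,x_{n+1}$, and the degree-$d$ shifts implicit in the form-induced identification $\Sigma V\to\Sigma^{d+1}V^*$. I expect these to cancel so that $f$ carries $\xi\mapsto\xi^*$ exactly to the $s$-generator of the \emph{dihedral} action with no residual sign; consequently $\Der^\cycl_+$ corresponds to the dihedral invariants and $\Der^\cycl_-$ to the skew-dihedral invariants. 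By the isomorphism of invariants with coinvariants these are $\Sigma^{d+1}\CD^\bullet_+(V)$ and $\Sigma^{d+1}\CD^\bullet_-(V)$, yielding the two bullet points.

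The main obstacle is precisely this sign reconciliation: the interplay of the minus sign in the dual involution, the $n(n+1)/2-1$ sign coming from the involutive $A_\infty$ condition, and the suspension signs is delicate, and an error would interchange the two isomorphisms. Everything else is in hand --- that $f$ is a chain map is \autoref{thm:cycder}, that the differential respects the eigenspace decomposition follows from $m^*=m$, and that invariants agree with coinvariants is the standard characteristic-zero averaging argument already invoked above.
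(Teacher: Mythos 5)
Your proposal follows essentially the same route as the paper: transport the isomorphism of \autoref{thm:cycder} to the involution eigenspaces, check it intertwines the involution on $\Der^\cycl(\widehat{T}\Sigma^{-1}V^*)$ with the $s$-generator of the dihedral action, and identify the $\pm 1$ eigenspaces with $\CD^\bullet_\pm(V)$ via invariants equals coinvariants. The only difference is that where the paper asserts the compatibility of the isomorphism with the involution is ``clear'', you flag the sign reconciliation as the point requiring verification --- a fair caveat, but not a different argument.
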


\begin{proof}
By \autoref{thm:cycder} $\Sigma^{d+1} \CC^\bullet(V) \cong \Der^\cycl(\widehat{T}\Sigma^{-1}V^*)$ as complexes. Furthermore it is clear this isomorphism also preserves the involution. As in the proof of \autoref{thm:cyclicdecomposition} $\CD^\bullet_+(V)$ and $\CD^\bullet_-(V)$ can be identified with the eigenspaces of the eigenvalues $+1$ and $-1$ of the involution on $\CC^\bullet(V)$, which correspond under this isomorphism to $\Der^\cycl_+(\widehat{T}\Sigma^{-1}V^*)$ and $\Der^\cycl_-(\widehat{T}\Sigma^{-1}V^*)$ respectively.
\end{proof}

\begin{remark}
It follows from \autoref{rem:cyclie} that $\Sigma^{d+1}\CD^\bullet_+(V)$ is a differential graded Lie subalgebra of $\Sigma^{d+1}\CC^\bullet(V)$.
\end{remark}

\section{Deformation theory of involutive \texorpdfstring{$A_\infty$--}{A-infinity }algebras}\label{sec:deftheory}
Just as the Hochschild cohomology and cyclic cohomology in a certain sense govern deformations of $A_\infty$--algebras and cyclic $A_\infty$--algebras, so we will see that the involutive Hochschild cohomology and dihedral cohomology govern deformations of involutive $A_\infty$--algebras and cyclic involutive $A_\infty$--algebras.

There is much well established general theory concerning deformation functors and their representability by differential graded Lie algebras so this section will be succinct and not at all comprehensive, particularly since this section will likely be self-evident to experts. If the reader is interested in a more general approach for homotopy algebras over arbitrary operads see \cite{hinich2004:deformationsofhomalgs,hinich1997:homalg}.

\subsection{The Maurer--Cartan moduli set}
We first recall the basic definition of a Maurer--Cartan element in a Lie algebra.

\begin{definition}
Let $\mathfrak{g}$ be a differential graded Lie algebra. A \emph{Maurer--Cartan element} in $\mathfrak{g}$ is a degree one element $\xi\in\mathfrak{g}$ satisfying the Maurer--Cartan equation
\[
d\xi + \frac{1}{2}[\xi, \xi]=0.
\]
We denote the set of Maurer--Cartan elements in $\mathfrak{g}$ by $\MC(\mathfrak{g})$.
\end{definition}

Since a map of $\mathfrak{g}\rightarrow\mathfrak{h}$ takes Maurer--Cartan elements to Maurer--Cartan elements we see that $\MC$ defines a functor on differential graded Lie algebras.

\begin{example}
Let $V$ be a graded vector space. Given an odd derivation $m\in\Der(\widehat{T}\Sigma^{-1}V^*)$ then $\frac{1}{2}[m,m]=m^2$. Therefore $A_\infty$--structures on $V$ are in one-to-one correspondence with Maurer--Cartan elements in the graded Lie algebra $\Der(\widehat{T}_{\geq 1}\Sigma^{-1}V^*)$. Similarly involutive, cyclic and cyclic involutive $A_\infty$--structures are in correspondence with Maurer--Cartan elements in the graded Lie algebras $\Der_+(\widehat{T}_{\geq 1}\Sigma^{-1}V^*)$, $\Der^\cycl(\widehat{T}_{\geq 1}\Sigma^{-1}V^*)$ and $\Der^\cycl_+(\widehat{T}_{\geq 1}\Sigma^{-1}V^*)$ respectively.
\end{example}

Given a Lie algebra $\mathfrak{g}$ define the ideals $[\mathfrak{g}]^n$ recursively by $[\mathfrak{g}]^1 = \mathfrak{g}$ and $[\mathfrak{g}]^{n} = [\mathfrak{g},\mathfrak{g}^{n-1}]$. Then $\mathfrak{g}$ is called \emph{nilpotent} if the \emph{descending central series} $[\mathfrak{g}]^1\supset [\mathfrak{g}]^2\supset [\mathfrak{g}]^3\supset\dots$ stabilises at $0$. Note that in the case $\mathfrak{g}$ is finite dimensional this is equivalent to the definition that for every $\xi\in\mathfrak{g}$, $\ad_\xi$ is nilpotent.

Now assume $\mathfrak{g}$ is \emph{pronilpotent}, by which it is meant that it is an inverse limit of nilpotent algebras. Recall that for every such Lie algebra there is an associative product $\bullet\co\mathfrak{g}\times\mathfrak{g}\rightarrow \mathfrak{g}$ given by the Baker--Campbell--Hausdorff formula which is functorial (given $f\co\mathfrak{g}\rightarrow\mathfrak{h}$ then $f(x\bullet y) = f(x)\bullet f(y)$) and for any unital associative algebra $A$ with pronilpotent ideal $I$ it holds for any $a,b\in I$ that $e^a e^b = e^{a\bullet b}$ where $e^a = \sum_{n\geq 0} \frac{a^n}{n!}\in A$ and $\bullet$ is taken with respect to the commutator Lie bracket on $A$. A property of $\bullet$ is that for any $x,y\in\mathfrak{g}$ if $[x,y]=0$ then $x\bullet y = x + y$.

Define the group $\exp(\mathfrak{g}) = \{e^x : x\in\mathfrak{g} \}$ with product defined as $e^x\cdot e^y = e^{x\bullet y}$. The identity is $1=e^0$ and $e^x\cdot e^{-x} = e^{-x}\cdot e^{x} = 1$. It follows from the pronilpotency of $\mathfrak{g}$ and the above properties of $\bullet$ that the adjoint representation $y\mapsto \ad_y$ exponentiates to an action of $\exp(\mathfrak{g})$ on $\mathfrak{g}$ given by $e^y\mapsto e^{\ad_y}$.

Now given $\mathfrak{g}$ a differential graded Lie algebra, let $\xi\in\MC(\mathfrak{g})$ and $y\in\mathfrak{g}^0$. Define the \emph{gauge action} by
\[
e^y\cdot \xi = e^{\ad_y}\xi + (de^{\ad_y})y = \xi + \sum_{n=1}^{\infty} \frac{1}{n!}(\ad_y)^{n-1}(\ad_y\xi-dy).
\]
Then this indeed gives an action of $\exp(\mathfrak{g}^0)$ on $\MC(\mathfrak{g})$.

\begin{definition}
If $\mathfrak{g}$ is a pronilpotent differential graded Lie algebra, define the \emph{Maurer--Cartan moduli set} $\MCmoduli(\mathfrak{g})$ to be the quotient $\MC(\mathfrak{g})/\exp(\mathfrak{g}^0)$ of the set of Maurer--Cartan elements by the gauge action.
\end{definition}

Again it is clear that $\MCmoduli$ defines a functor on pronilpotent differential graded Lie algebras.

Given a (not necessarily pronilpotent) differential graded Lie algebra $\mathfrak{g}$ and an augmented formal commutative algebra $R$ with maximal ideal $R_+$, then the differential graded Lie algebra $\mathfrak{g}\otimes R_+$ is pronilpotent. We define the deformation functor $\Def_{\mathfrak{g}}$ associated to $\mathfrak{g}$ by $R\mapsto \MCmoduli(\mathfrak{g}\otimes R_+)$.

\subsection{Deformations of algebras}
We will now recall how the classical deformation functor associated to an $A_\infty$--algebra is equivalent to the deformation functor associated to a certain Lie algebra. We will also consider the analogous result for involutive $A_\infty$--algebras.

From now on, let $R$ be an augmented formal commutative algebra. We will need the notion of an $R$--linear $A_\infty$--algebra structure on a graded $R$--module. We shall assume all $R$--modules are free, so that they are of the form $R\otimes V$ for $V$ a graded vector space.

\begin{definition}
Let $V$ be a graded vector space. An \emph{$R$--linear $A_\infty$--structure} on $R\otimes V$ is an $R$--linear derivation $m\co R \otimes \widehat{T}_{\geq 1}\Sigma^{-1}V^*\rightarrow R \otimes \widehat{T}_{\geq 1}\Sigma^{-1}V^*$ of degree one such that $m^2=0$.
\end{definition}

As before, for an $R$--linear $A_\infty$--algebra $(V,m)$ the derivation $m$ can be seen to be determined completely by a collection of $R$--linear maps $\hat{m}_n\co (R\otimes V)\otimes_R  \dots \otimes_R (R\otimes V) \rightarrow R\otimes V$ of degrees $2-n$ satisfying the usual $A_\infty$--conditions. In particular if $\hat{m}_n = 0$ for all $n > 2$ then this is just a differential graded associative $R$--algebra on $R\otimes V$.

\begin{definition}
Let $(R\otimes V,m)$ and $(R\otimes W, m')$ be $R$--linear $A_\infty$--algebras. Then an \emph{$R$--linear $A_\infty$--morphism} is a map $\phi$ of $R$--linear associative algebras $\phi\co R\otimes \widehat{T}_{\geq 1}\Sigma^{-1}W^*\rightarrow R \otimes \widehat{T}_{\geq 1}\Sigma^{-1}V^*$ such that $m\circ \phi = \phi \circ m'$.
\end{definition}

Let $(R\otimes V,m)$ be an $R$--linear $A_\infty$--algebra. Since $m$ is $R$--linear it preserves the associative algebra ideal $R_+\otimes \widehat{T}_{\geq 1} \Sigma^{-1}V^*$ so any such $R$--linear $A_\infty$--structure on $R\otimes V$ determines an underlying $A_\infty$--structure on $V$ by taking the quotient with respect to this ideal. We denote this structure by $\reduction{m}\co\widehat{T}_{\geq 1}\Sigma^{-1}V^*\rightarrow \widehat{T}_{\geq 1}\Sigma^{-1}V^*$ and call it the \emph{reduction of $m$}. Similarly any $R$--linear associative algebra map $\phi\co R\otimes \widehat{T}_{\geq 1}\Sigma^{-1}W^*\rightarrow R \otimes \widehat{T}_{\geq 1}\Sigma^{-1}V^*$ induces a map $\reduction{\phi}\co \widehat{T}_{\geq 1}\Sigma^{-1}W^*\rightarrow \widehat{T}_{\geq 1}\Sigma^{-1}V^*$, the \emph{reduction of $\phi$}. 

If $V$ is a graded vector space with an involution then the $R$--algebra $R\otimes \widehat{T}\Sigma^{-1}V^*$ is an involutive $R$--algebra in the obvious way by extending $R$--linearly.

\begin{definition}
Let $V$ be a graded vector space with an involution. An \emph{$R$--linear involutive $A_\infty$--structure} on $R\otimes V$ is an $R$--linear derivation $m\co R \otimes \widehat{T}_{\geq 1}\Sigma^{-1}V^*\rightarrow R \otimes \widehat{T}_{\geq 1}\Sigma^{-1}V^*$ of degree one such that $m^2=0$ and $m$ preserves the involution: $m(x^*)=m(x)^*$.
\end{definition}

The additional requirement that $m$ preserves the involution translates to the same condition as before on each of the maps $\hat{m}_n$ with respect to the obvious involution on $R\otimes V$ obtained by extending $R$--linearly the involution on $V$.

\begin{remark}
Note that in our definition of an $R$--linear involutive $A_\infty$--structure we only consider $R$--linear involutions on $R\otimes V$ which are free in the sense that they arise as the natural extension of an involution on $V$. In terms of deformations this should be understood as deforming just the involutive $A_\infty$--structure, but not the involution itself.
\end{remark}

\begin{definition}
Let $(R\otimes V,m)$ and $(R\otimes W, m')$ be $R$--linear involutive $A_\infty$--algebras. Then an \emph{$R$--linear involutive $A_\infty$--morphism} is a map $\phi$ of $R$--linear associative algebras $\phi\co R\otimes \widehat{T}_{\geq 1}\Sigma^{-1}W^*\rightarrow R \otimes \widehat{T}_{\geq 1}\Sigma^{-1}V^*$ such that $m\circ \phi = \phi \circ m'$ and $\phi$ preserves the involution: $\phi(x^*)=\phi(x)^*$.
\end{definition}

Clearly if $(R\otimes V,m)$ in an $R$--linear involutive $A_\infty$--algebra then the reduction $(V,\reduction{m})$ is an involutive $A_\infty$--algebra. Similarly Let $\phi$ be an $R$--linear associative algebra map $\phi\co R\otimes \widehat{T}_{\geq 1}\Sigma^{-1}W^*\rightarrow R \otimes \widehat{T}_{\geq 1}\Sigma^{-1}V^*$ which preserves the involution. Then clearly the reduction $\reduction{\phi}$ is an associative algebra map $\phi\co \widehat{T}_{\geq 1}\Sigma^{-1}W^*\rightarrow \widehat{T}_{\geq 1}\Sigma^{-1}V^*$ which preserves the involution.

\begin{definition}
\Needspace*{3\baselineskip}\mbox{}
\begin{itemize}
\item If $(V,m)$ is an $A_\infty$--algebra then an \emph{$R$--deformation} of $(V,m)$ is an $R$--linear $A_\infty$--structure $m'$ on $R\otimes V$ such that $\reduction{m'}=m$. Two deformations are \emph{equivalent} if they are $A_\infty$--isomorphic as $R$--linear $A_\infty$--structures such that the reduction of this isomorphism is the identity.

Given an $A_\infty$--algebra $(V,m)$ then we define the classical deformation functor $\Def_{(V,m)}$ associated to $(V,m)$ which assigns to $R$ the set of equivalence classes of $R$--deformations of $(V,m)$.
\item If $(V,m)$ is an involutive $A_\infty$--algebra then an \emph{involutive $R$--deformation} of $(V,m)$ is an $R$--linear involutive $A_\infty$--structure $m'$ on $R\otimes V$ such that $\reduction{m'}=m$. Two deformations are \emph{equivalent} if they are $A_\infty$--isomorphic as $R$--linear involutive $A_\infty$--structures such that the reduction of this isomorphism is the identity.

Given an involutive $A_\infty$--algebra $(V,m)$ then we define the classical deformation functor $\Def_{(V,m)}$ associated to $(V,m)$ which assigns to $R$ the set of equivalence classes of $R$--deformations of $(V,m)$.
\end{itemize}
\end{definition}

\begin{remark}
There are of course other less strict ways, perhaps more natural and familiar to the reader, of defining deformations and equivalence of deformations. For example one could define an $R$--deformation to be an $R$--linear $A_\infty$--algebra $(R\otimes W,m')$ equipped with an isomorphism between $(W,\reduction{m'})$ and $(V,m)$ and two deformations are then equivalent if there is an $R$--linear $A_\infty$--isomorphism commuting with the isomorphisms of reductions in the obvious manner. However, it is easy to see that a deformation in this less strict sense is always equivalent to a deformation in our sense.
\end{remark}

The following well known theorem now makes precise the sense in which the Hochschild cohomology of an $A_\infty$--algebra `governs the deformation theory'. This point of view is well known, so a detailed proof will not be repeated here. The notes \cite{doubekmarklzima2007:deformationtheory} give a good introduction to this approach to deformations of algebras.

\begin{theorem}
Let $(V,m)$ be an $A_\infty$--algebra. Setting $\mathfrak{g}=\Sigma\hoch^\bullet(V,V)_{\geq 1}=\Der(\widehat{T}_{\geq 1}\Sigma^{-1}V^*)$ then $\Def_{\mathfrak{g}}\simeq \Def_{(V,m)}$.
\end{theorem}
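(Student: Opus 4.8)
The plan is to produce, for each augmented formal commutative algebra $R$, a bijection between $\MCmoduli(\mathfrak{g}\otimes R_+)$ and the set of equivalence classes of $R$--deformations of $(V,m)$, and then to observe that these bijections are natural in $R$. Throughout I regard $\mathfrak{g}=\Der(\widehat{T}_{\geq 1}\Sigma^{-1}V^*)$ as a differential graded Lie algebra with differential $d(\xi)=[m,\xi]$, exactly as in the definition of $\hoch^\bullet(V,V)$, so that $\mathfrak{g}\otimes R_+$ is pronilpotent and $\MCmoduli(\mathfrak{g}\otimes R_+)$ is defined.

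First I would identify Maurer--Cartan elements with deformations. Any $R$--linear $A_\infty$--structure $m'$ on $R\otimes V$ with reduction $m$ can be written uniquely as $m'=m+\xi$ with $\xi\in(\mathfrak{g}\otimes R_+)^1$, since passing to the reduction kills precisely the $R_+$--component. Using $[m,m]=2m^2=0$ one computes $\tfrac{1}{2}[m',m']=[m,\xi]+\tfrac{1}{2}[\xi,\xi]=d\xi+\tfrac{1}{2}[\xi,\xi]$, so $(m')^2=0$ if and only if $\xi$ satisfies the Maurer--Cartan equation in $\mathfrak{g}\otimes R_+$. This yields a bijection between $\MC(\mathfrak{g}\otimes R_+)$ and the set of $R$--deformations of $(V,m)$.

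Next I would match the gauge action with equivalence of deformations. A degree zero element $y\in(\mathfrak{g}\otimes R_+)^0$ is an $R$--linear derivation reducing to zero, and because $R_+$ is pronilpotent it exponentiates to an $R$--linear algebra automorphism $e^y$ of $R\otimes\widehat{T}_{\geq 1}\Sigma^{-1}V^*$ whose reduction is the identity. The key point is the twisting identity
\[
m+(e^y\cdot\xi)=e^{\ad_y}(m+\xi),
\]
which one checks by expanding $e^{\ad_y}m=m-\sum_{n\geq1}\tfrac{1}{n!}(\ad_y)^{n-1}(dy)$ via $\ad_y m=[y,m]=-dy$ and comparing with the definition of the gauge action. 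Since $e^{\ad_y}(m+\xi)=e^{y}(m+\xi)e^{-y}$ as composites of operators, the structure $e^y\cdot m'=m+(e^y\cdot\xi)$ is a conjugate of $m'=m+\xi$, so $m'$ and $e^y\cdot m'$ are $A_\infty$--isomorphic by an isomorphism reducing to the identity; hence gauge--equivalent Maurer--Cartan elements give equivalent deformations. For the converse, any $A_\infty$--isomorphism of $R$--deformations reducing to the identity is an $R$--linear algebra automorphism $\Phi$ of $R\otimes\widehat{T}_{\geq 1}\Sigma^{-1}V^*$ reducing to the identity; using the decreasing filtration by the $\widehat{T}_{\geq i}$ together with completeness, $\Phi$ has a well defined logarithm $y=\log\Phi\in(\mathfrak{g}\otimes R_+)^0$, and the intertwining condition identifies its target with $e^y\cdot m'$. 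Passing to quotients then assembles these into a bijection $\MCmoduli(\mathfrak{g}\otimes R_+)\cong\Def_{(V,m)}(R)$, with naturality in $R$ immediate since every construction above is functorial in $R$.

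The main obstacle I anticipate is the bookkeeping in the gauge step rather than any conceptual difficulty: one must verify the twisting identity with the correct Koszul signs and respect the convention that an $A_\infty$--morphism from $V$ to $W$ runs $\widehat{T}_{\geq 1}\Sigma^{-1}W^*\to\widehat{T}_{\geq 1}\Sigma^{-1}V^*$ with $m\circ\phi=\phi\circ m'$ (so the conjugation realizing $e^y\cdot m'$ is by $e^{-y}$ according to this convention), and one must confirm that exponential and logarithm genuinely invert one another between degree zero derivations in $\mathfrak{g}\otimes R_+$ and filtered automorphisms reducing to the identity. By contrast, the underlying Lie--theoretic content is the standard equivalence between the gauge action on the differential graded Lie algebra twisted by the Maurer--Cartan element $m$ and conjugation of the untwisted structures, and is essentially formal.
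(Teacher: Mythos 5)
Your proposal is correct and follows essentially the same route as the paper's own outline: identify $R$--deformations with Maurer--Cartan elements via $m'=m+\xi$, and identify the gauge group $\exp(\mathfrak{g}^0\otimes R_+)$ with the group of $R$--linear automorphisms reducing to the identity via the exponential and logarithm, checking that the gauge action corresponds to conjugation. You simply spell out the twisting identity that the paper leaves as ``a straightforward check.''
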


\begin{proof}[Outline of proof]
Any $R$--linear derivation of $R\otimes \widehat{T}_{\geq 1}\Sigma^{-1}V^*$ is determined by its restriction to $\widehat{T}_{\geq 1}\Sigma^{-1}V^*$ so the Lie algebra of $R$--linear derivations of $R\otimes \widehat{T}_{\geq 1}\Sigma^{-1}V^*$ is isomorphic to $\Der(\widehat{T}_{\geq 1}\Sigma^{-1}V^*)\otimes R$. It is then straightforward to see that $\MC(\mathfrak{g}\otimes R_+)$ is in bijection with the set of $R$--deformations of $(V,m)$.

There is a group homomorphism $\exp(\mathfrak{g}^0\otimes R_+)\rightarrow \Aut_R(\widehat{T}_{\geq 1}\Sigma^{-1}V^*\otimes R)$ given by $e^y\mapsto \id + \sum_{n\geq 1} \frac{y^n}{n!}$ for $y\in\mathfrak{g}^0\otimes R_+$. The image of this map consists of those $R$--linear automorphisms whose reduction is the identity automorphism. Furthermore given $\phi$ in this image then $\log \phi = \sum_{n\geq 1} \frac{(-1)^{n+1}}{n}(\phi-\id)^n$ is a well-defined element of $\mathfrak{g}^0\otimes R_+$ and $\phi\mapsto e^{\log \phi}$ is an inverse on this image. It follows that there is an isomorphism of the gauge group $\exp(\mathfrak{g})$ of $\mathfrak{g}$ with the group of automorphisms whose reduction is the identity automorphism. A straightforward check shows that the two groups act on $\MC(\mathfrak{g}\otimes R_+)$ in the same way and the result then follows.
\end{proof}

It is completely clear from the proof of this theorem that an analogous result holds for involutive $A_\infty$--algebras. 

\begin{theorem}
Let $(V,m)$ be an involutive $A_\infty$--algebra. Setting $\mathfrak{g}=\Sigma\hoch^\bullet_+(V,V)_{\geq 1} = \Der_+(\widehat{T}_{\geq 1}\Sigma^{-1}V^*)$ then $\Def_{\mathfrak{g}} \simeq \Def_{(V,m)}$.
\end{theorem}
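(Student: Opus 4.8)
The plan is to replicate the proof of the preceding theorem essentially verbatim, replacing every space of derivations and every group of automorphisms by its involution-preserving subspace or subgroup, and then to verify that this restriction is compatible with each step of the argument. The only genuine content lies in these compatibilities, so I shall indicate where they are used and take the rest as formally identical to the non-involutive case.

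First I would identify the Lie algebra of $R$--linear involution-preserving derivations of $R\otimes\widehat{T}_{\geq 1}\Sigma^{-1}V^*$ with $\mathfrak{g}\otimes R=\Der_+(\widehat{T}_{\geq 1}\Sigma^{-1}V^*)\otimes R$. As before, any $R$--linear derivation is determined by its restriction to $\widehat{T}_{\geq 1}\Sigma^{-1}V^*$, yielding an identification with $\Der(\widehat{T}_{\geq 1}\Sigma^{-1}V^*)\otimes R$. Since the involution on $R\otimes\widehat{T}_{\geq 1}\Sigma^{-1}V^*$ fixes $R$ and acts only on the tensor factor, the induced involution $\xi\mapsto\xi^*$ on $\Der(\widehat{T}_{\geq 1}\Sigma^{-1}V^*)\otimes R$ has $+1$--eigenspace exactly $\Der_+(\widehat{T}_{\geq 1}\Sigma^{-1}V^*)\otimes R$, using that $k$ has characteristic zero so the eigenspace splitting survives tensoring with $R$. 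Next I would exhibit the bijection between $\MC(\mathfrak{g}\otimes R_+)$ and involutive $R$--deformations of $(V,m)$: a Maurer--Cartan element $\xi\in\mathfrak{g}^1\otimes R_+$ satisfies $[m,\xi]+\tfrac12[\xi,\xi]=0$, equivalently $(m+\xi)^2=0$, and since both $m$ and $\xi$ preserve the involution so does $m+\xi$, which is thus an $R$--linear involutive $A_\infty$--structure; as $\xi$ lies in $R_+$ its reduction is $m$, so $\xi\leftrightarrow m+\xi$ is the desired bijection.

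The one point I would spell out is the gauge group, where the key observation is that involution-preservation is stable under composition: if $\xi(x^*)=\xi(x)^*$ and $\eta(x^*)=\eta(x)^*$ then $(\xi\eta)(x^*)=\xi(\eta(x)^*)=(\xi\eta)(x)^*$. Consequently, for $y\in\mathfrak{g}^0\otimes R_+$ every power $y^n$ preserves the involution, so the automorphism $\id+\sum_{n\geq1}\tfrac{y^n}{n!}$ is involution-preserving with reduction the identity; conversely, for an involution-preserving $R$--linear automorphism $\phi$ with reduction the identity, $\log\phi=\sum_{n\geq1}\tfrac{(-1)^{n+1}}{n}(\phi-\id)^n$ again preserves the involution (since $\phi-\id$ does, and composition preserves the property) and so lies in $\mathfrak{g}^0\otimes R_+$. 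Thus exp and log restrict to a group isomorphism between the gauge group $\exp(\mathfrak{g}^0\otimes R_+)$ and the group of involution-preserving $R$--linear automorphisms with reduction the identity, which is precisely the group implementing equivalence of involutive deformations. A routine check, identical to the non-involutive one, shows the two group actions on $\MC(\mathfrak{g}\otimes R_+)$ agree, so passing to quotients gives $\Def_{\mathfrak{g}}\simeq\Def_{(V,m)}$.

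I do not anticipate a genuine obstacle: every step is formally identical to the non-involutive theorem once one knows that the involution-preserving condition is linear and closed under composition, tensoring with $R$, and the exponential and logarithm. The mildest care is needed in the eigenspace-splitting argument of the first paragraph and in confirming that $\log\phi$ remains involution-preserving, but both are immediate in characteristic zero, which is why the result is, as claimed, completely clear from the preceding proof.
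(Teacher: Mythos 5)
Your proposal is correct and follows the same route as the paper, which simply observes that the non-involutive argument carries over once one notes that degree $0$ involution-preserving derivations are exactly those exponentiating to involution-preserving automorphisms; your exp/log verification makes that observation explicit. No gaps.
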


\begin{proof}
The proof is the same as above, together with the observation that degree $0$ derivations preserving the involution are precisely the derivations which exponentiate to an automorphism preserving the involution.
\end{proof}

\subsection{Deformations of cyclic algebras}
If $V$ is a graded vector space with a symmetric bilinear form of degree $d$ then $R\otimes V$ has a symmetric $R$--bilinear form $(R\otimes V)\otimes_R (R\otimes V) \rightarrow \Sigma^d R$. An $R$--linear derivation $m\co R \otimes \widehat{T}_{\geq 1}\Sigma^{-1}V^*\rightarrow R \otimes \widehat{T}_{\geq 1}\Sigma^{-1}V^*$ will be called an \emph{$R$--linear cyclic derivation} if the maps $\hat{m}_n\co (R\otimes V)\otimes_R \dots \otimes_R (R\otimes V)\rightarrow R\otimes V$ satisfy \autoref{eq:cycliccondition} with respect to this $R$--bilinear form.

\begin{definition}
Let $V$ be a graded vector space with a symmetric bilinear form of degree $d$. An \emph{$R$--linear cyclic $A_\infty$--structure} on $R\otimes V$ is an $R$--linear \emph{cyclic} derivation $m\co R \otimes \widehat{T}_{\geq 1}\Sigma^{-1}V^*\rightarrow R \otimes \widehat{T}_{\geq 1}\Sigma^{-1}V^*$ of degree one such that $m^2=0$.
\end{definition}

\begin{remark}
Note that our definition of a $R$--linear cyclic $A_\infty$--structure only considers $R$--bilinear forms on $R\otimes V$ which are free in the sense that they arise in the natural way from a bilinear form $V\otimes V\rightarrow \Sigma^{d}k$. In terms of deformations this should be understood as deforming just the cyclic $A_\infty$--structure, but not the bilinear form itself.
\end{remark}

\begin{definition}
Let $V$ and $W$ be graded vector spaces with degree $d$ symmetric bilinear forms. Let $(R\otimes V,m)$ and $(R\otimes W, m')$ be $R$--linear cyclic $A_\infty$--algebras. Then an \emph{$R$--linear cyclic $A_\infty$--morphism} is a map $\phi$ of $A_\infty$--algebras such that $\phi(\omega')=\omega$ where $\omega\in\Sigma^{-1}V^*\otimes \Sigma^{-1}V^*$ and $\omega'\in\Sigma^{-1}W^*\otimes \Sigma^{-1}W^*$ are the degree $d+2$ elements representing the bilinear forms.
\end{definition}

The condition that $\phi(\omega')=\omega$ can, of course, be restated explicitly as the same condition given in \autoref{rem:preservebilinear}, but understood with respect to the $R$--bilinear forms.

Clearly the reduction of an $R$--linear cyclic $A_\infty$--algebra or $R$--linear cyclic $A_\infty$--morphism is a cyclic $A_\infty$--algebra or cyclic $A_\infty$--morphism.

\begin{definition}
Let $V$ be a graded vector space with an involution and a symmetric bilinear form such that $\langle x^*, y^* \rangle = \langle x, y \rangle$. An \emph{$R$--linear cyclic involutive $A_\infty$--structure} on $R\otimes V$ is an $R$--linear cyclic derivation $m\co R \otimes \widehat{T}_{\geq 1}\Sigma^{-1}V^*\rightarrow R \otimes \widehat{T}_{\geq 1}\Sigma^{-1}V^*$ of degree one such that $m^2=0$ and $m$ preserves the involution: $m(x^*)=m(x)^*$.
\end{definition}

\begin{definition}
Let $V$ and $W$ be graded vector spaces with involutions and degree $d$ symmetric bilinear forms such that $\langle x^*, y^* \rangle = \langle x, y \rangle$. Let $(R\otimes V,m)$ and $(R\otimes W, m')$ be $R$--linear cyclic involutive $A_\infty$--algebras. Then an \emph{$R$--linear cyclic involutive $A_\infty$--morphism} is a map $\phi$ of $R$--linear involutive $A_\infty$--algebras which is also a map of $R$--linear cyclic $A_\infty$--algebras.
\end{definition}

\begin{definition}
\Needspace*{3\baselineskip}\mbox{}
\begin{itemize}
\item If $(V,m)$ is a cyclic $A_\infty$--algebra then a \emph{cyclic $R$--deformation} of $(V,m)$ is a cyclic $R$--linear $A_\infty$--structure $m'$ on $R\otimes V$ such that $\reduction{m'}=m$. Two deformations are \emph{equivalent} if they are $A_\infty$--isomorphic as $R$--linear cyclic $A_\infty$--structures such that the reduction of this isomorphism is the identity.

Given a cyclic $A_\infty$--algebra $(V,m)$ then we define the classical deformation functor $\Def_{(V,m)}$ associated to $(V,m)$ which assigns to $R$ the set of equivalence classes of cyclic $R$--deformations of $(V,m)$.
\item If $(V,m)$ is a cyclic involutive $A_\infty$--algebra then a \emph{cyclic involutive $R$--deformation} of $(V,m)$ is a cyclic involutive $R$--linear $A_\infty$--structure $m'$ on $R\otimes V$ such that $\reduction{m'}=m$. Two deformations are \emph{equivalent} if they are $A_\infty$--isomorphic as $R$--linear cyclic involutive $A_\infty$--structures such that the reduction of this isomorphism is the identity.

Given a cyclic involutive $A_\infty$--algebra $(V,m)$ then we define the classical deformation functor $\Def_{(V,m)}$ associated to $(V,m)$ which assigns to $R$ the set of equivalence classes of cyclic involutive $R$--deformations of $(V,m)$.
\end{itemize}
\end{definition}

The following well known theorem (see, for example, \cite{penkavaschwarz:ainfalgsmodspace}) makes precise the sense in which cyclic cohomology governs the deformation theory of cyclic $A_\infty$--algebras.

\begin{theorem}
Let $(V,m)$ be a cyclic $A_\infty$--algebra. Setting $\mathfrak{g}=\Der^{\cycl}(\widehat{T}_{\geq 1}\Sigma^{-1}V^*)$ then $\Def_{\mathfrak{g}}\simeq \Def_{(V,m)}$. If the bilinear form on $V$ is non-degenerate with degree $d$ then $\mathfrak{g}\cong\Sigma^{d+1}\CC^\bullet(V)_{\geq 1}$.
\end{theorem}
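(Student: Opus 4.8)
The plan is to follow exactly the pattern of the proof given for the non-cyclic deformation theorem, checking at each stage that the cyclic condition survives, and then to read off the final isomorphism directly from \autoref{thm:cycder}.

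First I would set up the Maurer--Cartan correspondence. As before, an $R$--linear cyclic derivation of $R\otimes\widehat{T}_{\geq 1}\Sigma^{-1}V^*$ is determined by its restriction to $\widehat{T}_{\geq 1}\Sigma^{-1}V^*$, so the Lie algebra of such derivations is $\Der^\cycl(\widehat{T}_{\geq 1}\Sigma^{-1}V^*)\otimes R$. Given $\xi\in\MC(\mathfrak{g}\otimes R_+)$ I would set $m'=m+\xi$; since $m$ and $\xi$ are both cyclic derivations their sum is cyclic, and the identity $(m+\xi)^2=[m,\xi]+\tfrac12[\xi,\xi]=d\xi+\tfrac12[\xi,\xi]$ shows that $m'$ is an $R$--linear cyclic $A_\infty$--structure precisely when $\xi$ satisfies the Maurer--Cartan equation. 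As $\xi$ reduces to zero we have $\reduction{m'}=m$, giving a bijection between $\MC(\mathfrak{g}\otimes R_+)$ and cyclic $R$--deformations of $(V,m)$.

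Next I would match the gauge group with the equivalence relation. The gauge group is $\exp(\mathfrak{g}^0\otimes R_+)$, generated by degree zero \emph{cyclic} derivations tensored with $R_+$. The crucial ingredient is the cyclic analogue of the observation used in the involutive case: a degree zero derivation is cyclic exactly when it exponentiates to a cyclic automorphism, that is, an $R$--linear algebra automorphism preserving the element $\omega$ that represents the bilinear form. Granting this, the homomorphism $e^y\mapsto\id+\sum_{n\geq1}\tfrac{y^n}{n!}$ identifies $\exp(\mathfrak{g}^0\otimes R_+)$ with the group of $R$--linear cyclic automorphisms whose reduction is the identity, with inverse $\phi\mapsto e^{\log\phi}$ exactly as in the non-cyclic argument. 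A check that the gauge action on Maurer--Cartan elements coincides with the action of these automorphisms by transport of structure then shows the two groups act on $\MC(\mathfrak{g}\otimes R_+)$ in the same way, whence $\Def_{\mathfrak{g}}\simeq\Def_{(V,m)}$. For the final statement, when the bilinear form is non-degenerate of degree $d$ I would simply invoke \autoref{thm:cycder}, which gives $\Sigma^{d+1}\CC^\bullet(V)\cong\Der^\cycl(\widehat{T}\Sigma^{-1}V^*)$ as complexes, together with its filtered refinement noted after \autoref{rem:cyclie}: taking $n=1$ in $\Sigma^{d+1}\CC^\bullet(V)_{\geq n}\cong\Der^\cycl(\widehat{T}\Sigma^{-1}V^*)_{\geq n-1}$ and using $\Der^\cycl(\widehat{T}_{\geq1}\Sigma^{-1}V^*)=\Der^\cycl(\widehat{T}\Sigma^{-1}V^*)_{\geq0}$ yields $\mathfrak{g}\cong\Sigma^{d+1}\CC^\bullet(V)_{\geq1}$.

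The hard part will be the infinitesimal-to-global statement in the gauge step, namely that degree zero cyclic derivations are precisely those integrating to bilinear-form-preserving automorphisms. In Kontsevich's formal symplectic language (see the remark following \autoref{thm:cycder}) this is the familiar fact that symplectic vector fields integrate to symplectomorphisms, and I would make it rigorous by differentiating the condition $\phi(\omega)=\omega$ along the flow of $y$ to recover the defining cyclic identity \autoref{eq:cycliccondition}, and conversely integrating it; everything else is a routine transcription of the non-cyclic proof.
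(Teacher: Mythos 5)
Your proposal is correct and follows essentially the same route as the paper: the paper's proof simply says it is the same as the non-cyclic deformation argument together with the observation that degree $0$ cyclic derivations are precisely those exponentiating to automorphisms preserving $\omega$, which is exactly the key step you isolate, and your derivation of the final isomorphism from \autoref{thm:cycder} and the filtration identity $\Sigma^{d+1}\CC^\bullet(V)_{\geq n}\cong\Der^\cycl(\widehat{T}\Sigma^{-1}V^*)_{\geq n-1}$ matches the paper's intent. Your extra detail on the Maurer--Cartan bijection and the symplectic-vector-field heuristic is a sound elaboration of what the paper leaves implicit.
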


\begin{proof}
The proof is the same as before, together with the observation that degree $0$ cyclic derivations are precisely the derivations which exponentiate to an automorphism preserving $\omega\in\Sigma^{-1}V^*\otimes \Sigma^{-1}V^*$.
\end{proof}

We can now obtain the analogous result for cyclic involutive $A_\infty$--algebras immediately.

\begin{theorem}
Let $(V,m)$ be a cyclic involutive $A_\infty$--algebra. Setting $\mathfrak{g}=\Der^{\cycl}_+(\widehat{T}_{\geq 1}\Sigma^{-1}V^*)$ then $\Def_{\mathfrak{g}}\simeq \Def_{(V,m)}$. If the bilinear form on $V$ is non-degenerate with degree $d$ then $\mathfrak{g}\cong\Sigma^{d+1}\CD^\bullet_+(V)_{\geq 1}$.
\end{theorem}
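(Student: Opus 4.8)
The plan is to reuse verbatim the template of the three preceding deformation theorems, merging the two refinements that were introduced there one at a time. Setting $\mathfrak{g}=\Der^\cycl_+(\widehat{T}_{\geq 1}\Sigma^{-1}V^*)$, I would first record the Maurer--Cartan bijection exactly as in the outline of the first deformation theorem: an $R$--linear cyclic involutive derivation of $R\otimes\widehat{T}_{\geq 1}\Sigma^{-1}V^*$ is determined by its restriction, so the Lie algebra of such derivations is $\mathfrak{g}\otimes R$, and a degree one element squares to zero precisely when it satisfies the Maurer--Cartan equation. This identifies $\MC(\mathfrak{g}\otimes R_+)$ with the set of cyclic involutive $R$--deformations of $(V,m)$.

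For the gauge action I would combine the two observations already made for the cyclic and the involutive cases. A degree zero element of $\mathfrak{g}$ is by definition a degree zero derivation that is simultaneously cyclic and involution--preserving. By the observation in the cyclic deformation theorem such a derivation exponentiates to an $R$--linear automorphism preserving the form element $\omega\in\Sigma^{-1}V^*\otimes\Sigma^{-1}V^*$, and by the observation in the involutive deformation theorem it exponentiates to one preserving the involution; since these two conditions on the resulting automorphism are independent, a degree zero cyclic involutive derivation is exactly one whose exponential preserves both $\omega$ and the involution. Hence $\exp(\mathfrak{g}^0\otimes R_+)$ is identified with the group of $R$--linear cyclic involutive automorphisms reducing to the identity, the gauge action matches the equivalence relation on cyclic involutive deformations, and $\Def_{\mathfrak{g}}\simeq\Def_{(V,m)}$ follows.

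For the final identification in the non-degenerate case I would invoke the earlier theorem establishing $\Sigma^{d+1}\CD^\bullet_+(V)\cong\Der^\cycl_+(\widehat{T}\Sigma^{-1}V^*)$ as complexes. That isomorphism is induced from the one of \autoref{thm:cycder} by passing to $\mathbb{Z}_2$--(co)invariants, so it respects the filtrations on both sides; restricting to filtration degree $\geq 1$ on the dihedral side and correspondingly to $\Der^\cycl_+(\widehat{T}\Sigma^{-1}V^*)_{\geq 0}=\Der^\cycl_+(\widehat{T}_{\geq 1}\Sigma^{-1}V^*)$ on the derivation side yields $\mathfrak{g}\cong\Sigma^{d+1}\CD^\bullet_+(V)_{\geq 1}$, as required.

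I expect no genuinely hard step here: the entire content is the compatibility of the two exponentiation observations, which is immediate once each has been established separately. The only point deserving a line of care is checking that the complex isomorphism $\Sigma^{d+1}\CD^\bullet_+(V)\cong\Der^\cycl_+(\widehat{T}\Sigma^{-1}V^*)$ restricts correctly to the truncated pieces, that is, that the filtration it respects on the dihedral complex is indexed so that $\Sigma^{d+1}\CD^\bullet_+(V)_{\geq 1}$ matches the cyclic involutive derivations vanishing in arity zero, exactly as in the cyclic case recorded after \autoref{thm:cycder}.
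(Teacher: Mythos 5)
Your proposal is correct and follows exactly the route the paper intends: the paper's proof is literally ``an obvious combination of the previous proofs'', and your write-up simply makes that combination explicit (the Maurer--Cartan bijection, the two exponentiation observations for cyclic and involution-preserving derivations, and the restriction of the isomorphism $\Sigma^{d+1}\CD^\bullet_+(V)\cong\Der^\cycl_+(\widehat{T}\Sigma^{-1}V^*)$ to the filtered pieces). No discrepancy to report.
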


\begin{proof}
The proof is an obvious combination of the previous proofs.
\end{proof}

\begin{example}
Consider the space of infinitesimal deformations, in other words deformations over $R=k[\epsilon]/(\epsilon^2)$. The moduli set of infinitesimal deformations of an involutive $A_{\infty}$--algebra $(V,m)$ is $\Hhoch^2_+(V,V)_{\geq 1}$. The moduli set of infinitesimal deformations of a cyclic involutive $A_\infty$--algebra $(V,m)$ with a non-degenerate bilinear form of degree $0$ is $\HCD_+^2(V)_{\geq 1}$.
\end{example}

\section{Generalisations to other operads}\label{sec:operads}
The purpose of this section is twofold. Firstly we will briefly outline how the ideas above can be generalised to other Koszul operads. Secondly we show how the above definition of an involutive $A_\infty$--algebra (or more generally an involutive homotopy $\mathcal{P}$--algebra) is a `correct' notion in the sense that it is equivalent to an algebra over a cofibrant replacement for the operad governing involutive associative algebras (or more generally for the operad governing involutive $\mathcal{P}$--algebras).

\begin{remark}
We will only deal with involutive homotopy $\mathcal{P}$--algebras parallel to \autoref{sec:noncyc}. When $\mathcal{P}$ is cyclic results parallel to \autoref{sec:cyc} concerning cyclic involutive homotopy $\mathcal{P}$--algebras and cyclic/dihedral cohomology should also generalise easily, for example by combining with the work in \cite{getzlerkapranov1995:cyclicoperads}. However the cyclic case is a little more complex and it's felt that such a generalisation here would take us too far afield.
\end{remark}

\subsection{Operad basics}
It is assumed that the reader is familiar with the theory of operads. However, for convenience we will briefly outline our notation and recall a couple of pertinent facts here, mainly from Ginzburg--Kapranov \cite{ginzburgkapranov1994:koszuloperads}.

Let $K$ be a semisimple associative algebra and denote by $\dgop(K)$ the category of differential graded operads $\mathcal{P}$ such that all the spaces $\mathcal{P}(n)$ are finite dimensional differential graded vector spaces and as algebras $\mathcal{P}(1)=K$. Given $\mathcal{P}\in\dgop(K)$ denote the cobar construction/dual operad by $\dual{\mathcal{P}}\in\dgop(K^{\mathrm{op}})$ so that there is a natural quasi-isomorphism $\dual\dual\mathcal{P}\rightarrow\mathcal{P}$, see \cite{ginzburgkapranov1994:koszuloperads}. If $\mathcal{P}\in\dgop(K)$ is a quadratic operad denote by $\mathcal{P}^!\in\dgop(K^{\mathrm{op}})$ its quadratic dual and call $\mathcal{P}$ Koszul if the natural map $\dual\mathcal{P}^!\rightarrow\mathcal{P}$ is a quasi-isomorphism.

In the case that $\mathcal{P}$ is Koszul, we take the following to be the definition of a homotopy $\mathcal{P}$--algebra.

\begin{definition}
Let $\mathcal{P}$ be a Koszul operad with Koszul dual $\mathcal{P}^!$. Then a \emph{homotopy $\mathcal{P}$--algebra} is an algebra over the operad $\dual\mathcal{P}^!$.
\end{definition}

Given $\mathcal{P}\in\dgop(K)$ then $P(n)$ is naturally a $(K, K^{\otimes n})$--bimodule. Let $V$ be a \emph{formal} left $K$--module, that is a left $K$--module which is an inverse limit of free graded left $K$--modules of finite rank. Then $V^{\otimes n}$ is a formal left $K^{\otimes n}$--module, where the tensor product is taken to be the \emph{completed} tensor product. Denote by $\cfree{\mathcal{P}}{V}$ the free formal $\mathcal{P}$--algebra generated by $V$, whose underlying space is given by
\[
\cfree{\mathcal{P}}{V} = \prod_{n=1}^{\infty} \mathcal{P}(n)\otimes_{K^{\otimes n}} V^{\otimes n}.
\]

\begin{definition}
Let $A$ be a $\mathcal{P}$--algebra. A \emph{graded derivation} of $A$ is a graded map $f\co A\rightarrow A$ such that for any $m \in \mathcal{P}(n)$
\[
f(m(a_1, \dots, a_n )) = \sum_{i=1}^n (-1)^{\epsilon} m(a_1, \dots, a_{i-1}, f(a_i), a_{i+1}, \dots, a_n)
\]
where $\epsilon = \degree{f}(\degree{m}+\degree{a_1}+\dots+\degree{a_{i-1}})$.The space spanned by all graded derivations forms a Lie subalgebra of the space $\IHom(A,A)$ of linear maps with the commutator bracket and is denoted by $\Der^\mathcal{P}(A)$.
\end{definition}

\begin{remark}
To avoid cluttered expressions, we will suppress the repetition of notation where possible and write $\Der(\cfree{\mathcal{P}}{W})$ for $\Der^{\mathcal{P}}(\cfree{\mathcal{P}}{W})$.
\end{remark}

If $V$ is a discrete graded left $K$--module, then $V^*$ is a formal graded left $K^{\mathrm{op}}$--module.

\begin{proposition}\label{prop:homotopyalgisderivation}
If $\mathcal{P}\in\dgop(K)$ is a Koszul operad with Koszul dual $\mathcal{P}^!$ then a homotopy $\mathcal{P}$--algebra structure on a (discrete) graded left $K$--module $V$ is equivalent to a Maurer--Cartan element in $\Der(\cfree{\mathcal{P}^!}{\Sigma^{-1}V^*})$.
\end{proposition}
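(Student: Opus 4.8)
The plan is to identify both sides with the same underlying combinatorial data and then to match the conditions imposed on each; throughout write $W=\Sigma^{-1}V^*$, so that the differential graded Lie algebra in question is $\Der(\cfree{\mathcal{P}^!}{W})$. First I would unwind the right--hand side. Exactly as in the associative case, a derivation of the free $\mathcal{P}^!$--algebra is determined by its restriction to the generators, so $\Der(\cfree{\mathcal{P}^!}{W})\cong\IHom(W,\cfree{\mathcal{P}^!}{W})$; splitting by arity, a derivation $m$ becomes a collection of maps
\[
m_n\co W\longrightarrow \mathcal{P}^!(n)\otimes_{K^{\otimes n}} W^{\otimes n}.
\]
Since each $\mathcal{P}^!(n)$ is finite dimensional and $V\mapsto V^*$ is the monoidal anti--equivalence of the notation section (with $V^{**}\cong V$, $(V\otimes W)^*\cong V^*\otimes W^*$ and $(\Sigma V)^*\cong\Sigma^{-1}V^*$), dualizing $m_n$ and cancelling the suspensions produces exactly a $K$--equivariant map $\mathcal{P}^!(n)^*\to\IHom(V^{\otimes n},V)$. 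Thus a derivation $m$ is the same datum as one such map in each arity.

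Next I would unwind the left--hand side. By definition a homotopy $\mathcal{P}$--algebra structure on $V$ is a morphism of differential graded operads $\dual{\mathcal{P}^!}\to\mathcal{E}nd_V$, where $\mathcal{E}nd_V(n)=\IHom(V^{\otimes n},V)$. As a graded operad, forgetting its differential, the cobar construction $\dual{\mathcal{P}^!}$ is free on generators given in arity $n$ by a shift of $\mathcal{P}^!(n)^*$; hence a morphism of graded operads out of it is determined freely and uniquely by its restriction to these generators, namely by precisely the same collection of maps $\mathcal{P}^!(n)^*\to\IHom(V^{\otimes n},V)$. The shift carried by the generators is exactly what matches a degree--one derivation $m$ with a degree--preserving morphism of graded operads. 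Assembling the two identifications yields a bijection, natural in $V$, between degree--one derivations of $\cfree{\mathcal{P}^!}{W}$ and morphisms of graded operads $\dual{\mathcal{P}^!}\to\mathcal{E}nd_V$.

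It remains to match the conditions. On the derivation side the Maurer--Cartan equation reads $dm+\tfrac12[m,m]=0$; on the operad side one needs the graded morphism $\dual{\mathcal{P}^!}\to\mathcal{E}nd_V$ to commute with the differentials, so as to be a genuine morphism of dg operads. Because $\dual{\mathcal{P}^!}$ is free as a graded operad, this chain--map condition need only be verified on the generators, where the cobar differential is the single cocomposition dual to the (quadratic) composition of $\mathcal{P}^!$. I would then compute $[m,m]$ on $W$: expanding one copy of $m$ through the other by the derivation property reproduces precisely these operadic compositions, so that $\tfrac12[m,m]$ is dual, term by term, to the cobar differential applied to the generators, while $dm$ accounts for the internal differentials of $\mathcal{P}^!$ and of $V$. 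Hence the Maurer--Cartan equation holds if and only if the corresponding morphism is a chain map, and the bijection above restricts to the asserted equivalence.

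The main obstacle is this last step: checking that the commutator bracket of derivations of $\cfree{\mathcal{P}^!}{W}$ is dual to the cobar differential of $\dual{\mathcal{P}^!}$, with all Koszul and suspension signs accounted for. This is the operadic generalisation of the fact, used throughout \autoref{sec:noncyc}, that a square--zero degree--one derivation of $\widehat{T}_{\geq 1}\Sigma^{-1}V^*$ is the same as an $A_\infty$--algebra, that is, an algebra over $\dual{\ass^!}$; indeed the case $\mathcal{P}=\ass$ is the template, and the general signs are governed by the bar--cobar formalism of Ginzburg--Kapranov \cite{ginzburgkapranov1994:koszuloperads} (see also \cite{lodayvallette2012:operadbook}). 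Granting this verification, the equivalence follows.
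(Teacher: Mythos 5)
Your proposal is correct and is precisely the standard argument that the paper itself delegates to the literature: the paper's entire proof is ``This is entirely standard. See \cite[Proposition 4.2.14]{ginzburgkapranov1994:koszuloperads}'', and your sketch (derivations of a free algebra determined on generators, dualisation to maps $\mathcal{P}^!(n)^*\to\IHom(V^{\otimes n},V)$, freeness of the cobar construction as a graded operad, and the identification of the Maurer--Cartan equation with dg-compatibility) is exactly the content of that cited result. The sign and suspension bookkeeping you defer is likewise deferred by the paper, so the two treatments match in both substance and level of detail.
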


\begin{proof}
This is entirely standard. See, for example, \cite[Proposition 4.2.14]{ginzburgkapranov1994:koszuloperads}.
\end{proof}

\subsection{Involutive \texorpdfstring{$\mathcal{P}$}{P}--algebras}
In \cite{braun:moduliklein} the functor $\mob$ taking $\mathcal{P}\in\dgop(k)$ to $\mob\mathcal{P}\in\dgop(k[\mathbb{Z}_2])$ was introduced.

The operad $\mob\mathcal{P}$ is obtained by freely adjoining an element $a$ to $\mathcal{P}(1)$ in degree $0$ and imposing the relations $da=0$, $a^2=1$ and the reflection relation $am=\tau_n(m)a^{\otimes n}$ for all $m\in\mathcal{P}(n)$. Here $\tau_n=(1\quad n)(2\quad n-1)(3\quad n-2)\ldots\in S_n$ is the permutation reversing $n$ labels.

\begin{definition}
Let $\mathcal{P}\in\dgop(k)$. Then an \emph{involutive $\mathcal{P}$--algebra} is an algebra over $\mob\mathcal{P}\in\dgop(k[\mathbb{Z}_2])$.
\end{definition}

A left $k[\mathbb{Z}_2]$--module is equivalent to a vector space with an involution. Therefore the operad $\mob\ass$ governs involutive associative algebras. The operad $\mob\com$ governs involutive commutative algebras, in other words commutative algebras with an involution $x\mapsto x^*$ such that $(xy)^*=(-1)^{\degree{x}\degree{y}}y^*x^* = x^*y^*$. Similarly, $\mob\lie$ governs involutive Lie algebras, in other words Lie algebras with an involution such that $[x,y]^* = (-1)^{\degree{x}\degree{y}}[y^*,x^*] = -[x^*,y^*]$.

\begin{remark}\label{rem:invskewinv}
One should compare the definition of an involutive Lie algebra with that of a skew-involutive Lie algebra in \autoref{def:skewinvlie}. Clearly any involutive algebra can be converted into a skew-involutive algebra and vice versa by replacing the involution $x\mapsto x^*$ with $x\mapsto -x^*$.
\end{remark}

\begin{proposition}\label{prop:freeinv}
Let $\mathcal{P}\in\dgop(k)$ and let $W$ be a formal left $k[\mathbb{Z}_2]$--module. As $\mathcal{P}$--algebras $\cfree{\mob\mathcal{P}}{W}\cong\cfree{\mathcal{P}}{W}$.
\end{proposition}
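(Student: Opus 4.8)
The plan is to reduce the claim to an arity-by-arity computation by exploiting the explicit description of $\mob\mathcal{P}$ afforded by its defining relations. First I would establish a normal form for elements of $(\mob\mathcal{P})(n)$: using $a^2=1$ together with the reflection relation $am=\tau_n(m)a^{\otimes n}$, every occurrence of $a$ in a composite can be pushed onto the inputs, so that each element is a linear combination of terms $m\circ(a^{\delta_1},\dots,a^{\delta_n})$ with $m\in\mathcal{P}(n)$ and $\delta_i\in\{0,1\}$, an output $a$ being absorbed into the leaves via the reflection relation. The substance of the $\mob$ construction in \cite{braun:moduliklein} is precisely that these normal forms satisfy no further relations, so that as a right $(k[\mathbb{Z}_2])^{\otimes n}$--module (acting on the input decorations) $(\mob\mathcal{P})(n)$ is free with $(\mob\mathcal{P})(n)\cong\mathcal{P}(n)\otimes(k[\mathbb{Z}_2])^{\otimes n}$. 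A quick sanity check for $\mathcal{P}=\ass$ confirms this: both sides have dimension $n!\cdot 2^n$, matching the operations $y_{\sigma(1)}\cdots y_{\sigma(n)}$ with each $y_i\in\{x_i,x_i^*\}$.

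Granting this, the underlying--space isomorphism is immediate. Since $W$ is a $k[\mathbb{Z}_2]$--module, in the balanced tensor product the input decorations may be absorbed into $W^{\otimes n}$ through the involution, $a^{\delta_i}\cdot w_i=(w_i)^{\ast}$ when $\delta_i=1$; explicitly
\[
(\mob\mathcal{P})(n)\otimes_{(k[\mathbb{Z}_2])^{\otimes n}}W^{\otimes n}\cong\bigl(\mathcal{P}(n)\otimes(k[\mathbb{Z}_2])^{\otimes n}\bigr)\otimes_{(k[\mathbb{Z}_2])^{\otimes n}}W^{\otimes n}\cong\mathcal{P}(n)\otimes W^{\otimes n},
\]
and taking the product over $n$ identifies the underlying formal spaces of $\cfree{\mob\mathcal{P}}{W}$ and $\cfree{\mathcal{P}}{W}$. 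The map sends the class of $m\circ(a^{\delta_1},\dots,a^{\delta_n})\otimes(w_1\otimes\dots\otimes w_n)$ to $m\otimes(a^{\delta_1}w_1\otimes\dots\otimes a^{\delta_n}w_n)$; it is well defined precisely because of the balancing relation over $(k[\mathbb{Z}_2])^{\otimes n}$, and it is visibly invertible.

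It then remains to verify that this identification respects the $\mathcal{P}$--algebra structures, where $\cfree{\mob\mathcal{P}}{W}$ carries the structure restricted along the canonical inclusion $\mathcal{P}\hookrightarrow\mob\mathcal{P}$ while $\cfree{\mathcal{P}}{W}$ carries its free structure. Here I would simply observe that an operation $p\in\mathcal{P}(k)\subset(\mob\mathcal{P})(k)$ carries no output $a$, so operadic associativity in $\mob\mathcal{P}$ yields $p\circ(m_1\circ(a^{\vec\delta_1}),\dots,m_k\circ(a^{\vec\delta_k}))=\bigl(p\circ(m_1,\dots,m_k)\bigr)\circ(a^{\vec\delta_1},\dots,a^{\vec\delta_k})$: the input decorations remain on the leaves and the $\mathcal{P}$--parts compose as in $\mathcal{P}$, which is exactly the free $\mathcal{P}$--action on $\cfree{\mathcal{P}}{W}$ after the decorations are absorbed into $W$. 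Compatibility with the differentials is automatic, since $da=0$ forces the differential on $\mob\mathcal{P}$ to agree with that of $\mathcal{P}$ on the normal forms.

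The only genuinely substantive point is the structural description of $(\mob\mathcal{P})(n)$ as a free right $(k[\mathbb{Z}_2])^{\otimes n}$--module with basis $\mathcal{P}(n)$, that is, that pushing the copies of $a$ to the leaves collapses no dimension; this is the heart of the $\mob$ construction, and I would invoke \cite{braun:moduliklein} for it rather than reprove it. Once it is in hand everything else is routine bookkeeping, the Koszul signs produced by the permutations $\tau_n$ being exactly those already encoded in the $k[\mathbb{Z}_2]$--action on $W^{\otimes n}$.
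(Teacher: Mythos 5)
Your proof is correct and follows essentially the same route as the paper, whose entire argument is the observation that $\mob\mathcal{P}(n)\otimes_{k[\mathbb{Z}_2]^{\otimes n}}W^{\otimes n}\cong\mathcal{P}(n)\otimes W^{\otimes n}$ as $k$--linear spaces; you have simply unpacked why that observation holds (freeness of $(\mob\mathcal{P})(n)$ as a right $(k[\mathbb{Z}_2])^{\otimes n}$--module via the normal form with all copies of $a$ pushed to the leaves) and verified the compatibility of the $\mathcal{P}$--algebra structures, which the paper leaves implicit.
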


\begin{proof}
This follows from the observation that as $k$--linear vector spaces $\mob\mathcal{P}(n)\otimes_{k[\mathbb{Z}_2]^{\otimes n}} W^{\otimes n}\cong \mathcal{P}(n)\otimes W^{\otimes n}$.
\end{proof}

\begin{remark}
Of course it follows that $\cfree{\mathcal{P}}{W}$ has a natural involution making it into an $\mob\mathcal{P}$--algebra. Explicitly, the involution on $\cfree{\mathcal{P}}{W}$ is defined in the entirely obvious way by
\[
(m\otimes x_1 \otimes \dots \otimes x_n)^* = (-1)^\epsilon m \otimes x_n^*\otimes \dots \otimes x_1^*.
\]
\end{remark}

Let $\mathfrak{g}$ be a skew-involutive graded Lie algebra, so that $[\xi,\eta]^* = [\xi^*,\eta^*]$. Write $\mathfrak{g}_+$ for the $+1$ eigenspace, which is a Lie subalgebra. Write $\mathfrak{g}_-$ for the $-1$ eigenspace so that $\mathfrak{g}=\mathfrak{g}_+\oplus \mathfrak{g}_-$ as graded vector spaces and
\[
[\mathfrak{g}_-,\mathfrak{g}_-]\subset \mathfrak{g}_+.
\]
Furthermore, given $\xi\in\MC(\mathfrak{g}_+)$, equipping the graded vector space $\mathfrak{g}$ with the differential $d(\eta) = [\xi,\eta]$ then as \emph{differential} graded vector spaces $\mathfrak{g} = \mathfrak{g}_+ \oplus \mathfrak{g}_-$.

\begin{proposition}
Let $A$ be an $\mob\mathcal{P}$--algebra. Then by forgetting the involution $A$ is naturally a $\mathcal{P}$--algebra and $\Der^{\mathcal{P}}(A)$ is a skew-involutive Lie algebra, with involution $\xi\mapsto \xi^*$ defined by $\xi^*(x) = \xi(x^*)^*$.
\end{proposition}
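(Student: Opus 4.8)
The plan is to follow the proof of the earlier skew-involutive proposition for $\Der(\widehat{T}\Sigma^{-1}V^*)$ verbatim in spirit, now phrased in terms of the operadic derivation condition. First I would unwind what the involution on an $\mob\mathcal{P}$--algebra $A$ says concretely: the adjoined generator $a\in\mob\mathcal{P}(1)$ acts as $x\mapsto x^*$, so the reflection relation $am=\tau_n(m)a^{\otimes n}$ unwraps to the compatibility
\[
m(a_1,\dots,a_n)^* = (-1)^\epsilon m(a_n^*,\dots,a_1^*),
\]
where $(-1)^\epsilon$ is the Koszul sign produced by the reversal permutation $\tau_n$ acting on the degrees (this is exactly the general-operad analogue of the explicit $\ass$--formula for $\hat m_n$ recorded earlier). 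The assertion that forgetting the involution leaves a $\mathcal{P}$--algebra is then immediate: there is a canonical inclusion of operads $\mathcal{P}\hookrightarrow\mob\mathcal{P}$, and restricting the $\mob\mathcal{P}$--action along it (together with restriction of scalars from $k[\mathbb{Z}_2]$ to $k$) yields the underlying $\mathcal{P}$--algebra structure, so $\Der^{\mathcal{P}}(A)$ makes sense.

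Next I would verify that $\xi^*(x)=\xi(x^*)^*$ is again a $\mathcal{P}$--derivation, and this is the step I expect to be the main obstacle, since it is the only place requiring genuine sign bookkeeping. The computation is forced: starting from $\xi^*\bigl(m(a_1,\dots,a_n)\bigr)=\xi\bigl(m(a_1,\dots,a_n)^*\bigr)^*$, I would apply the compatibility above to pull the inner $*$ through $m$, then use that $\xi$ is a derivation to distribute $\xi$ over $m(a_n^*,\dots,a_1^*)$, then apply the outer $*$, and finally invoke the compatibility a second time on each summand to reassemble it into the derivation form for $\xi^*$. The difficulty is confirming that the two applications of the reflection sign, the internal signs from $\xi$'s derivation rule, and the reversal $\tau_n$ combine to yield precisely the required sign $(-1)^{\degree{\xi}(\degree{m}+\degree{a_1}+\dots+\degree{a_{i-1}})}$, using $\degree{x^*}=\degree{x}$ and $\degree{\xi^*}=\degree{\xi}$; this is routine but is where all the real work lives.

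With $\xi^*$ known to be a derivation, the skew-involutive structure is formal, exactly as in the $A_\infty$ case. The central identity is $\xi(\eta(x^*))^*=\xi^*(\eta^*(x))$, which holds because $(\eta^*(x))^*=\eta(x^*)$ and hence $\xi^*(\eta^*(x))=\xi\bigl((\eta^*(x))^*\bigr)^*=\xi(\eta(x^*))^*$. Applying the (linear, degree-preserving) involution termwise to $[\xi,\eta]=\xi\circ\eta-(-1)^{\degree{\xi}\degree{\eta}}\eta\circ\xi$ and using this identity for both terms gives $[\xi,\eta]^*=[\xi^*,\eta^*]$. Finally $(\xi^*)^*=\xi$ follows from $x^{**}=x$, so $\xi\mapsto\xi^*$ is an involution and $\Der^{\mathcal{P}}(A)$ is a skew-involutive Lie algebra, as claimed.
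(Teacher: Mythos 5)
Your proposal is correct and follows essentially the same route as the paper: the paper's proof likewise verifies that $\xi^*$ is a $\mathcal{P}$--derivation by pushing the involution through $m$ via the reflection relation (writing $m(a_1,\dots,a_n)^*$ in terms of $\tau_n(m)$ applied to the reversed starred arguments), distributing $\xi$, and reassembling, and then deduces the skew-involutive Lie structure from the same key identity $\xi(\eta(x^*))^* = \xi^*(\eta^*(x))$. The only difference is cosmetic: the paper keeps $\tau_n(m)$ explicit in the intermediate lines rather than absorbing the reversal into the sign, and it likewise leaves the sign bookkeeping to the Koszul rule.
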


\begin{proof}
First check $\xi^*$ is in $\Der^\mathcal{P}(A)$. Let $m\in\mathcal{P}(n)$ then
\begin{align*}
\xi^*(m(a_1, \dots, a_n )) 
&= (-1)^\epsilon\xi(\tau_n(m)(a_n^*,\dots, a_1^*))^*\\
&= \sum_{i=1}^n (-1)^{\epsilon'} \tau_n(m)(a_n^*, \dots, a_{i+1}^*, \xi(a_i^*), a_{i-1}^*, \dots, a_1^*)^*\\
&= \sum_{i=1}^n (-1)^{\epsilon''} m(a_1, \dots, a_{i-1}, \xi^*(a_i), a_{i+1}, \dots, a_n)
\end{align*}
where $\epsilon$, $\epsilon'$ and $\epsilon''$ arise from the Koszul sign rule, remembering that the involution preserves degree. Therefore $\xi^*$ is indeed a derivation and the result now follows from the observation that $\xi(\eta(x^*))^* = \xi^*(\eta^*(x))$.
\end{proof}

\begin{remark}\label{rem:mobderpreservesinv}
In this case we denote the decomposition of $\Der^\mathcal{P}(A)$ as $\Der^{\mathcal{P}}(A)=\Der_+^\mathcal{P}(A)\oplus \Der_-^\mathcal{P}(A)$. Then of course $\Der_+^\mathcal{P}(A)$ are those derivations which preserve the involution and $\Der^{\mob\mathcal{P}}(A)=\Der^{\mathcal{P}}_+(A)$.
\end{remark}

\subsection{Involutive homotopy \texorpdfstring{$\mathcal{P}$}{P}--algebras}
If $V$ is a graded vector space with an involution then $V^*$ is naturally a formal left $k[\mathbb{Z}_2]^{\mathrm{op}}$--module, where the involution is defined by $\phi^*(v)=\phi(v^*)$. Additionally $V^*$ can also be seen as a formal left $k[\mathbb{Z}_2]$--module where the involution is defined by $\phi^*(v)=-\phi(v^*)$. Of course $k[\mathbb{Z}_2]$ is commutative so both structures could be regarded as left $k[\mathbb{Z}_2]$--module structures but it is important to distinguish the two.

\begin{definition}
Let $\mathcal{P}\in\dgop(k)$ be Koszul with Koszul dual $\mathcal{P}^!\in\dgop(k)$ and let $V$ be a vector space with an involution. Then an \emph{involutive homotopy $\mathcal{P}$--algebra structure} on $V$ is a Maurer--Cartan element in $\Der_+(\cfree{\mathcal{P}^!}{\Sigma^{-1}V^*})$.
\end{definition}

The following theorem asserts that an involutive homotopy $\mathcal{P}$--algebra in this sense is in fact precisely a homotopy $\mob\mathcal{P}$--algebra.

\begin{theorem}
Let $\mathcal{P}\in\dgop(k)$ be Koszul with Koszul dual $\mathcal{P}^!$ and let $V$ be a graded vector space with an involution. Then $\mob\mathcal{P}$ is also Koszul and there is an isomorphism of Lie algebras $\Der(\cfree{(\mob\mathcal{P})^!}{\Sigma^{-1}V^*}) \cong \Der_+(\cfree{\mathcal{P}^!}{\Sigma^{-1}V^*})$.
\end{theorem}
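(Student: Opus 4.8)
The plan is to reduce both assertions to a single structural identity, namely that the $\mob$ construction commutes with Koszul duality: $(\mob\mathcal{P})^!\cong\mob(\mathcal{P}^!)$. Granting this, the rest follows with little extra work, so I would establish it first. Since every element of $\mob\mathcal{P}(n)$ is uniquely of the form $m$ or $am$ for $m\in\mathcal{P}(n)$ (by the reflection relation $am=\tau_n(m)a^{\otimes n}$), there is an identification $\mob\mathcal{P}(n)\cong\mathcal{P}(n)\otimes k[\mathbb{Z}_2]$ as graded vector spaces, and because $\mathcal{P}$ is Koszul it is in particular quadratic. I would first check that $\mob\mathcal{P}$ is again quadratic: its generators sit in $\mob\mathcal{P}(2)$ as a $(k[\mathbb{Z}_2],k[\mathbb{Z}_2]^{\otimes 2})$--bimodule, and its relations are generated in arity three by the relations $R$ of $\mathcal{P}$ together with their reflections under $a$. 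Computing the orthogonal complement $R^\perp$ in the presence of the extra $k[\mathbb{Z}_2]$--factor, one checks that the duality pairing respects the splitting coming from $k[\mathbb{Z}_2]$, so that $(\mob\mathcal{P})^!(n)\cong\mathcal{P}^!(n)\otimes k[\mathbb{Z}_2]$ carries exactly the reflection structure that defines $\mob(\mathcal{P}^!)$.

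For Koszulity I would then compare cobar constructions. Because $k[\mathbb{Z}_2]$ is semisimple (recall $\mathrm{char}\,k=0$), the functor $\mob$ is exact and commutes with the cobar construction $\dual$, giving a natural isomorphism $\dual(\mob\mathcal{Q})\cong\mob(\dual\mathcal{Q})$. Applying $\mob$ to the Koszul quasi-isomorphism $\dual\mathcal{P}^!\xrightarrow{\sim}\mathcal{P}$ and using exactness yields a quasi-isomorphism $\mob(\dual\mathcal{P}^!)\xrightarrow{\sim}\mob\mathcal{P}$; combining this with the identities $\mob(\dual\mathcal{P}^!)\cong\dual(\mob\mathcal{P}^!)\cong\dual((\mob\mathcal{P})^!)$ produces the quasi-isomorphism $\dual((\mob\mathcal{P})^!)\xrightarrow{\sim}\mob\mathcal{P}$, which is precisely Koszulity of $\mob\mathcal{P}$.

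The isomorphism of Lie algebras is then formal. Using $(\mob\mathcal{P})^!\cong\mob(\mathcal{P}^!)$ I would rewrite the left-hand side as $\Der^{\mob(\mathcal{P}^!)}(\cfree{\mob(\mathcal{P}^!)}{\Sigma^{-1}V^*})$. By \autoref{prop:freeinv} the underlying free algebra is unchanged, $\cfree{\mob(\mathcal{P}^!)}{\Sigma^{-1}V^*}\cong\cfree{\mathcal{P}^!}{\Sigma^{-1}V^*}$, equipped with its canonical involution making it an $\mob(\mathcal{P}^!)$--algebra, and by \autoref{rem:mobderpreservesinv} the derivations of an $\mob(\mathcal{P}^!)$--algebra are exactly the involution-preserving derivations of the underlying $\mathcal{P}^!$--algebra. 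Hence the left-hand side is $\Der_+(\cfree{\mathcal{P}^!}{\Sigma^{-1}V^*})$, as claimed, and since all the identifications used are induced by algebra and operad isomorphisms they are compatible with the commutator brackets, so the result is an isomorphism of Lie algebras. The main obstacle throughout is the duality identity $(\mob\mathcal{P})^!\cong\mob(\mathcal{P}^!)$: the base change of ground ring from $k$ to $k[\mathbb{Z}_2]$ together with the twisting by the permutations $\tau_n$ introduces sign and equivariance bookkeeping in the pairing defining $R^\perp$ that must be tracked carefully, and one must check that the two a priori different $k[\mathbb{Z}_2]$--module structures on $\Sigma^{-1}V^*$ (differing by the sign in $\phi^*(v)=\pm\phi(v^*)$) are matched correctly on the two sides.
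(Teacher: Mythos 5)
Your endgame coincides with the paper's: both arguments funnel everything through the identity $(\mob\mathcal{P})^!\cong\mob(\mathcal{P}^!)$ and then conclude via \autoref{prop:freeinv} and \autoref{rem:mobderpreservesinv}, so the final Lie-algebra identification is the same. The difference is in how that identity and the Koszulity of $\mob\mathcal{P}$ are obtained: the paper simply cites Theorem 3.33 of \cite{braun:moduliklein}, whereas you attempt to reprove both from scratch. That is a legitimate alternative route, but as written the two hardest steps are asserted rather than proved: the computation of $R^\perp$ showing that the quadratic dual of $\mob\mathcal{P}$ carries exactly the reflection structure of $\mob(\mathcal{P}^!)$ is compressed into ``one checks,'' and the claim that $\mob$ commutes with the cobar construction $\dual$ is nontrivial (exactness of $\mob$ arity-by-arity is clear since $\mob\mathcal{Q}(n)\cong\mathcal{Q}(n)\otimes k[\mathbb{Z}_2]^{\otimes n}$ as complexes, but $\dual$ involves a free operad construction and $\mob$ adjoins a unary generator, so the compatibility $\dual\circ\mob\cong\mob\circ\dual$ needs an actual argument -- it is essentially the content of the cited theorem). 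A smaller slip: elements of $\mob\mathcal{P}(n)$ are not uniquely of the form $m$ or $am$; the reflection relation only pushes $a$'s to the inputs, so $\mob\mathcal{P}(n)\cong\mathcal{P}(n)\otimes k[\mathbb{Z}_2]^{\otimes n}$ rather than $\mathcal{P}(n)\otimes k[\mathbb{Z}_2]$. Finally, the sign issue you flag at the end as ``to be tracked carefully'' is exactly the point the paper resolves explicitly: $(\mob\mathcal{P})^!(1)=k[\mathbb{Z}_2]^{\mathrm{op}}$ while $\mob(\mathcal{P}^!)(1)=k[\mathbb{Z}_2]$, the isomorphism between them sends $a\mapsto -a$, and this is what matches the $k[\mathbb{Z}_2]^{\mathrm{op}}$--structure $\phi^*(v)=\phi(v^*)$ on $\Sigma^{-1}V^*$ with the $k[\mathbb{Z}_2]$--structure $\phi^*(v)=-\phi(v^*)$; your writeup would need to carry this out rather than defer it, since without it the identification $\Der(\cfree{(\mob\mathcal{P})^!}{\Sigma^{-1}V^*})\cong\Der(\cfree{\mob(\mathcal{P}^!)}{\Sigma^{-1}V^*})$ is not yet justified.
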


\begin{proof}
By \cite[Theorem 3.33]{braun:moduliklein} $\mob\mathcal{P}$ is also Koszul and there is an isomorphism $(\mob\mathcal{P})^!\cong\mob(\mathcal{P}^!)$. Because $(\mob\mathcal{P})^!(1) = k[\mathbb{Z}_2]^{\mathrm{op}}$ and $\mob(\mathcal{P}^!)(1)=k[\mathbb{Z}_2]$ and the operad isomorphism between them takes the non-trivial element $a\in \mathbb{Z}_2$ to $-a$, then $\Der(\cfree{(\mob\mathcal{P})^!}{\Sigma^{-1}V^*}) \cong \Der(\cfree{\mob(\mathcal{P})^!}{\Sigma^{-1}V^*})$. The right hand side is then isomorphic to $\Der_+(\cfree{\mathcal{P}^!}{\Sigma^{-1}V^*})$ by \autoref{prop:freeinv} and \autoref{rem:mobderpreservesinv} as required.
\end{proof}

\begin{corollary}
An involutive homotopy $\mathcal{P}$--structure on $V$ is the same as a homotopy $\mob\mathcal{P}$--algebra structure on $V$.
\qed
\end{corollary}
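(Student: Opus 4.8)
The plan is to reduce everything to the two facts already established: the computation of the quadratic dual of $\mob\mathcal{P}$ from \cite{braun:moduliklein}, and the identification in \autoref{prop:freeinv} of free $\mob\mathcal{P}$--algebras with free $\mathcal{P}$--algebras carrying their natural involution. First I would invoke \cite[Theorem 3.33]{braun:moduliklein} to obtain that $\mob\mathcal{P}$ is Koszul together with the operad isomorphism $(\mob\mathcal{P})^!\cong\mob(\mathcal{P}^!)$; this is genuinely external input and I would not attempt to reprove it here. The remaining content is then purely a matter of correctly matching up the $k[\mathbb{Z}_2]$--module structure on the generating space $\Sigma^{-1}V^*$ under this isomorphism.

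The key point is the bookkeeping of which involution $\Sigma^{-1}V^*$ carries. Since $(\mob\mathcal{P})^!(1)=k[\mathbb{Z}_2]^{\mathrm{op}}$, the free algebra $\cfree{(\mob\mathcal{P})^!}{\Sigma^{-1}V^*}$ regards $\Sigma^{-1}V^*$ as a formal left $k[\mathbb{Z}_2]^{\mathrm{op}}$--module, that is with the involution $\phi\mapsto\phi^*$ given by $\phi^*(v)=\phi(v^*)$. The isomorphism of \cite{braun:moduliklein} sends the nontrivial element $a$ to $-a$, so transporting the module structure across it reinterprets $\Sigma^{-1}V^*$ as a $\mob(\mathcal{P}^!)$--module in which $a$ now acts by $\phi\mapsto-\phi^*$; this is precisely the second $k[\mathbb{Z}_2]$--module structure singled out before the statement. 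I would record this as an isomorphism $\cfree{(\mob\mathcal{P})^!}{\Sigma^{-1}V^*}\cong\cfree{\mob(\mathcal{P}^!)}{\Sigma^{-1}V^*}$, compatible with derivations and hence inducing $\Der(\cfree{(\mob\mathcal{P})^!}{\Sigma^{-1}V^*})\cong\Der(\cfree{\mob(\mathcal{P}^!)}{\Sigma^{-1}V^*})$.

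Finally I would apply \autoref{prop:freeinv} with $\mathcal{P}$ replaced by $\mathcal{P}^!$ to identify the underlying $\mathcal{P}^!$--algebra of $\cfree{\mob(\mathcal{P}^!)}{\Sigma^{-1}V^*}$ with $\cfree{\mathcal{P}^!}{\Sigma^{-1}V^*}$ equipped with its natural involution, and then \autoref{rem:mobderpreservesinv} to conclude that the derivations of this $\mob(\mathcal{P}^!)$--algebra are exactly the involution--preserving derivations, namely $\Der_+(\cfree{\mathcal{P}^!}{\Sigma^{-1}V^*})$. Composing the three identifications yields the asserted isomorphism of Lie algebras, the bracket being preserved at each stage since all maps are restrictions or transports of the commutator bracket.

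I expect the main obstacle to be nothing computational but rather the sign: one must verify that the twist $a\mapsto-a$ in the quadratic dual exactly converts the involution used to define $\cfree{(\mob\mathcal{P})^!}{\Sigma^{-1}V^*}$ into the one for which \autoref{rem:mobderpreservesinv} produces the $+1$--eigenspace $\Der_+$ rather than its skew counterpart $\Der_-$. Getting this minus sign right --- equivalently, keeping track of the distinction between the $k[\mathbb{Z}_2]$-- and $k[\mathbb{Z}_2]^{\mathrm{op}}$--module structures on $V^*$ and their skew-involutive behaviour as in \autoref{rem:invskewinv} --- is the heart of the argument, and everything else is formal.
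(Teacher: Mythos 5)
Your argument is correct and is essentially the paper's own: the preceding theorem is proved exactly by citing \cite[Theorem 3.33]{braun:moduliklein}, transporting the generating module across the sign twist $a\mapsto -a$ relating $(\mob\mathcal{P})^!(1)=k[\mathbb{Z}_2]^{\mathrm{op}}$ to $\mob(\mathcal{P}^!)(1)=k[\mathbb{Z}_2]$, and then applying \autoref{prop:freeinv} and \autoref{rem:mobderpreservesinv} to land in $\Der_+(\cfree{\mathcal{P}^!}{\Sigma^{-1}V^*})$. The only step left tacit is the final, routine one: invoke \autoref{prop:homotopyalgisderivation} for the Koszul operad $\mob\mathcal{P}$ and note that an isomorphism of Lie algebras identifies Maurer--Cartan elements on both sides.
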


For a homotopy $\mathcal{P}$--algebra $V$ represented by $m\in\MC(\Der(\cfree{\mathcal{P}^!}{\Sigma^{-1}V^*}))$ the complex $C^\bullet_\mathcal{P}(V) = \Sigma^{-1}\Der(\cfree{\mathcal{P}^!}{\Sigma^{-1}V^*})$ with differential $d(\xi)=[m,\xi]$ is the $\mathcal{P}$--cohomology complex of $V$. Clearly if this is an involutive homotopy $\mathcal{P}$--algebra then this decomposes into the involutive $\mathcal{P}$--cohomology complex (which is the $\mob\mathcal{P}$--cohomology complex) and the skew-involutive $\mathcal{P}$--cohomology complex.

\begin{example}
Let $\mathfrak{g}$ be an involutive differential graded Lie algebra (or more generally an involutive $L_\infty$--algebra). Then the Chevalley--Eilenberg cohomology decomposes as $\HCE^\bullet(\mathfrak{g},\mathfrak{g})\cong\HCE^\bullet_+(\mathfrak{g},\mathfrak{g})\oplus\HCE^\bullet_-(\mathfrak{g},\mathfrak{g})$. If $\mathfrak{g}$ is equipped with the involution $v^*=-v$ then $\HCE^\bullet_-(\mathfrak{g},\mathfrak{g})=0$.

 Dually if $C$ is an involutive differential graded commutative algebra (or more generally an involutive $C_\infty$--algebra) then the Harrison cohomology decomposes as $\Hharr^\bullet(C,C) \cong \Hharr^\bullet_+(C,C)\oplus \Hharr^\bullet_-(C,C)$. If $C$ is equipped with the identity involution then $\Hharr^\bullet_-(C,C)=0$.
\end{example}

\bibliography{references}
\bibliographystyle{alphaurl}

\end{document}